\newtheorem{theorem}{Theorem}[section]
\newtheorem{proposition}[theorem]{Proposition}
\newtheorem{lemma}[theorem]{Lemma}
\newtheorem{remark}{Remark}[section]
\newtheorem{maintheorem}{Theorem}
\newtheorem{maincorollary}{Corollary}
\newcommand{\Rr}{\mathbb R}
\newcommand{\Zz}{\mathbb Z}
\newcommand{\R}{\mathbb{R}}
\newcommand{\Z}{\mathbb{Z}}
\newcommand{\Tt}{{\mathbb{T}}}
\newcommand{\Aa}{{\mathbb{A}}}
\newcommand{\LL}{\mathcal L}
\newcommand{\RR}{{\mathcal R}}
\newcommand{\tr}{{\rm tr}}
\DeclareMathAlphabet{\mathpzc}{OT1}{pzc}{m}{it}
\newcommand{\comment}[1]{}
\title[Shadowing and hyperbolicity for geodesic flows]
{
On shadowing and hyperbolicity for geodesic flows on surfaces}
\author[M. Bessa]{M\'{a}rio Bessa}
\address{Universidade da Beira Interior, 
Rua Marqu\^es d'\'Avila e Bolama,
6201-001 Covilh\~a, Portugal}
\email{bessa@ubi.pt}
\author[J. Lopes Dias]{Jo\~{a}o Lopes Dias}
\address{Departamento de Matem\'atica and CEMAPRE, ISEG, 
Universidade de Lisboa,
Rua do Quelhas 6, 
1200-781 Lisboa, Portugal}
\email{jldias@iseg.ulisboa.pt}
\author[M. J. Torres]{Maria Joana Torres}
\address{CMAT and Departamento de Matem\'atica e Aplica\c{c}\~{o}es, 
Universidade do Minho, 
Campus de Gualtar, 
4700-057 Braga, Portugal}
\email{jtorres@math.uminho.pt}
\date{\today}
\begin{document}


\begin{abstract}
We prove that the geodesic flow on closed surfaces displays a hyperbolic set if the shadowing property holds $C^2$-robustly on the metric. Similar results are obtained when considering even feeble properties like the weak shadowing and the specification properties. Despite the Hamiltonian nature of the geodesic flow, the arguments in the present paper differ completely from those used in ~\cite{BRT} for Hamiltonian systems.
\end{abstract}

\maketitle

\noindent
\textbf{Keywords:} geodesic flow, hyperbolic sets, shadowing, specification.

\noindent
\textbf{\textup{2010} Mathematics Subject Classification:}
Primary: 53D25, 37D30, 37C50  Secondary: 37J10, 37C75.
37D30.


\section{Introduction}\label{introduction}

The geodesic flow associated to a given metric describes the flow trajectory of a  free particle not subject to external forces. 
When studying the geodesic flow associated to negative curvature manifolds, Anosov discovered a surprising and quite rigid geometrical and dynamical property: uniform hyperbolicity (\cite{A}).
Its main characteristic is the uniform rate of contraction and expansion of the invariant directions under the tangent flow.

Uniform hyperbolicity turned out to be a fundamental ingredient for the understanding of general dynamical systems (see e.g. \cite{S1}). 
It allowed the construction of a fruitful ge\-o\-me\-tric theory of invariant manifolds, a stability theory (uniform hyperbolicity is essentially equivalent to structural stability),
a statistical theory (smooth ergodic theory) and a numerical theory (shadowing and expansiveness).

It was however realized from an early stage that uniform hyperbolicity was far from covering the complete scope of possible dynamical behaviours, even in a typical sense.
A mechanical system which is uniformly hyperbolic on each positive energy level is due to Hunt and MacKay \cite{HM}, but other examples are scarse.
Other more flexible definitions of hyperbolicity began to arise like nonuniform hyperbolicity, partial hyperbolicity  (see e.g. ~\cite{CPuj} in the context of geodesic flows) and dominated splitting.

Uniform hyperbolicity was found to yield very interesting numerical properties unlike other kind of systems. For instance, the \emph{shadowing property} holds for uniformly hyperbolic systems, i.e. almost orbits affected with a certain error are aproximated by true orbits. This amazing property not present under partial hyperbolicity ~\cite{BDT}, contains in itself much of the rigidity of the strong assumptions of uniform hyperbolicity. 
Two other quite important properties are the specification (\S\ref{spec}) and the weak shadowing (\S\ref{wsp}) properties, which on the other hand can appear in partial hyperbolic systems.

The case of Hamiltonian systems yields some surprising consequences arising from numerical properties.
If we assume $C^2$-robustness of shadowing, 
then the closure of the periodic points is a uniformly hyperbolic set~\cite{BRT}. 
Thus, due to the general density theorem for Hamiltonians~\cite{PugRob},  the Hamiltonian is Anosov. 
In other words, stability of a numerical property allow us to obtain geometrical, dynamical and also topological knowledge.

A natural question is whether these results are extensible to the subclass of Hamiltonians formed by the geodesic flows.

In this paper we restrict our study to surfaces and show that the robustness of the shadowing also implies that the closure of the periodic points is a uniformly hyperbolic set (Theorem~\ref{thm1}).

The perturbation tools for geodesic flows are very delicate as opposed to the general Hamiltonian case. 
We can only perturb the metric, hence the perturbation is never a local issue in phase space. 
Furthermore, the hyperbolic structure of the closure of the periodic orbits cannot be extrapolated to the whole energy level due to the absence of a closing lemma for geodesic flows. 
As a consequence we are not able to assure global hyperbolicity.
Other techniques not available in the geodesic flow context are the so-called pasting lemma and suspension theorem which were crucial in ~\cite{BRT}.

Our proof of ~Theorem~\ref{thm1} relies on properties of the geodesic flow on surfaces, in particular on the twist property of the Poincar\'e section map about a closed geodesic. 
The existence of invariant curves surrounding elliptic points of area-preserving twist maps implies the spliting of the two-dimensional section into invariant disjoint open sets, thus forbiding shadowing.
So, under stability of shadowing the periodic points can only be hyperbolic. 
Finally, a well-known result by Contreras and Paternain~\cite{CP} guarantees that the closure of the periodic points is a uniformly hyperbolic set.

In \S\ref{section geodesic flow} we introduce the Hamiltonian geodesic flow and the Poincar\'e section map.
The definitions of the shadowing properties appear in \S\ref{section: shadowing}.
The main results are stated in \S\ref{State} and proved in \S\ref{proofs}.
In \S\ref{section applications} we present some interesting applications.



\section{Geodesic flow}\label{section geodesic flow}


\subsection{The geodesic flow framework}

Let $(M,g)$ be a surface, i.e. a compact without boundary connected $C^\infty$ Riemannian manifold of dimension $2$, with $g\in\RR^r(M)$. 
Here $\RR^r(M)$ stands for the set of $C^r$ metrics on $M$ with $2 \leq r \leq \infty$. Given a tangent vector $v\in T_xM$ at a point $x\in M$, denote by 
$$
\gamma_{x,v}\colon[0,+\infty)\to M
$$
the geodesic such that $\gamma_{x,v}(0)=x$ and $\dot\gamma_{x,v}(0)=v$.
The {\em geodesic flow} of $g$ is the one-parameter family of diffeomorphisms on the tangent bundle
\begin{equation*}
\begin{split}
\phi_g^t\colon TM & \to TM \\
(x,v) & \mapsto \left(\gamma_{x,v}(t),\dot\gamma_{x,v}(t)\right).
\end{split}
\end{equation*}
Since geodesics travel with constant speed, the unit tangent bundle 
$$
S_gM=\{(x,v)\in TM\colon g_x(v,v)=1 \}
$$ 
is preserved by $\phi_g^t$.
By writing the canonical projection $\pi\colon S_gM\to M$, we have that geodesics $\gamma\subset M$ lift to orbits of the geodesic flow $\pi^{-1}\gamma\subset S_gM$.

It is widely known that the geodesic flow is a Hamiltonian flow given by $(x,v)\mapsto\frac12 g_x(v,v)$ on $TM$ for a symplectic form depending on $g$ (cf.~\cite{Paternain}). 
This is related to another Hamiltonian flow on the cotangent bundle $T^*M$ with a symplectic form which does not depend on the metric and it is defined in the following way.

Let $(x,p)\in T^*M$ and $\eta\in T_{(x,p)}T^*M$. 
Using the canonical projection $\tilde\pi\colon T^*M\to M$, consider the one-form
$$
\lambda(\eta)=p(d\tilde\pi(x,p)\,\eta). 
$$
Notice that in local coordinates this form is simply given by $p\,dx$.
Now, $\omega=-d\lambda$ is a symplectic form on $T^*M$ (in local coordinates, $\omega=dx\wedge dp$).

The Hamiltonian flow can be obtained from the fact that geodesics are solutions of the Euler-Lagrange equation for the Lagrangian $L(x,v)=\frac12 g_x(v,v)$, $(x,v)\in TM$.
Using the Legendre transform $\LL\colon TM\to T^*M$, the problem can be put into the Hamiltonian formalism by writing the Hamiltonian $H=L\circ \LL^{-1}$.


In local coordinates of $M$ we can write for $x\in M$ and $v\in T_xM$ the metric
$$
g_x(v,v)=\langle A(x)^{-1} v,v\rangle,
$$
where $A(x)^{-1}$ is a symmetric positive definite matrix, $x\mapsto A^{-1}(x)$ is $C^\infty$ and $\langle \cdot,\cdot\rangle$ stands for the usual inner product.
The Legendre transformation $\LL\colon TM\to T^*M$ is
$$
\LL(x,v)=(x,A(x)^{-1}v)=(x,p),
$$
i.e. $v=A(x)\,p$.
The Hamiltonian that generates the geodesic flow is then $H\colon T^*M\to\Rr$ with
$$
H(x,p)=\frac12\langle A(x)p,p\rangle.
$$
Notice that $H$ is actually a metric on $T^*M$.

The Hamiltonian vector field generates the orbits of the Hamiltonian flow $\varphi_g^t$ which are the same for every energy level up to a time reparametrization. 
That is, $\varphi_g^{st}(x,p)=\varphi_g^t(x,sp)$.
It is therefore enough to consider one energy level, in particular the invariant unit cotangent bundle
$$
S_g^*M=H^{-1}(1/2).
$$

The flow $\varphi_g^t\colon S_g^*M\to S_g^*M$ is called the {\em Hamiltonian geodesic flow} associated to $g$ since $\tilde\pi\circ \varphi_g^t(x,p)=\gamma_{x,v}(t)$.
Moreover, the relation between the Hamiltonian geodesic flow $\varphi_g^t$ on $T^*M$ and the geodesic flow $\phi_g^t$ on $TM$ is
\begin{equation}\label{Legendre}
\phi_g^t=\LL^{-1}\circ\varphi_g^t\circ \LL.
\end{equation}
We shall denote by $d(\cdot,\cdot)$ the distance function in $S_g^*M$.

\begin{remark}
A local perturbation of the Riemannian metric $g$ supported in a set $U\subset M$, causes a change of the geodesic flow in all fibers in $S_g^*U\subset S_g^*M$ and not just in a neighbourhood in the phase space $S_g^*M$.
This is a major difficulty for the use of local perturbations in the Riemannian metrics setting.
\end{remark}

Since $M$ is compact so is $S_g^*M$.
A \emph{transversal} $\Sigma$ to the flow at a regular point $(x,p)$ in $S_g^*M$ is a $2$-dimensional smooth submanifold verifying
$$
T_{(x,p)}S_g^*M = T_{(x,p)}\Sigma \oplus \Rr X_g(x,p).
$$
Note that $\Sigma$ is a symplectic submanifold.

Consider a $C^1$-family of transversals $\Sigma_t$ to the flow at $\varphi_g^t(x,p)$, $t\geq0$, and of neighborhoods $U_t\subset S_g^*M$ of $(x,p)$.
The \emph{transversal Poincar\'e flow} of $g$ at $(x,p)$ is defined to be the family of $C^{1}$-symplectomorphisms 
$$
P_g^t\colon \Sigma_0\cap U_t\to  
\Sigma_t
$$ 
given by
$
P_g^t(y,q)=\varphi_g^{\Theta(y,q,t)}(y,q)
$
with
$$
\Theta(y,q,t)=\min\{s\geq0\colon \varphi_g^s(y,q) \in\Sigma_t\}.
$$
We assume that $U_t$ is sufficiently small such that, by the implicit function theorem, $\Theta$ is $C^{1}$ and $\Theta(U_t,t)$ is bounded for a fixed $t>0$.

The \textit{transversal linear Poincar\'e flow} of $g$ at $(x,p)$ is the derivative of $P_g^t$ at $(x,p)$,
$$
DP_g^t(x,p)\colon T_{(x,p)} \Sigma_0 \to T_{\varphi_g^t(x,p)}\Sigma_t.
$$ 

Given a regular point $(x,p)$, we say that $(x,p)$ is a \textit{periodic point} of the Hamiltonian geodesic flow $\varphi_g^t$ if $\varphi_g^t(x,p)=(x,p)$ for some $t$. The smallest $t_0>0$ satisfying the condition above is called \emph{period} of $(x,p)$; in this case, we say that the orbit of $(x,p)$ is a \textit{closed orbit} of period $t_0$.
Nontrivial closed geodesics on $M$ for $g$ are in one-to-one correspondence with the closed orbits of $\varphi_g^t$. When $(x,p)$ is periodic of period $\ell>0$ we call $P_g:=P_g^\ell(x,p)$ the \emph{Poincar\'e map} and $\Sigma$ the \emph{Poincar\'e section}.

In~\cite[\S2.3]{MBJLD} it was proved a result for Hamiltonians which can be translated into our context in the following way: a $\varphi_g^t$-invariant regular compact subset $\Lambda\subset S_g^*M$ is uniformly hyperbolic for $\varphi_g^t$ if and only if the associated transversal linear Poincar\'{e} flow $DP^t_g$ is uniformly hyperbolic on the tangent space of $\Sigma_\Lambda$ denoted by $T\Sigma_\Lambda$. With this in mind we define the hyperbolic structures with respect to the transversal linear Poincar\'{e} flow. 

Given a $C^2$-metric  $g$ and a $\varphi_g^t$-invariant, compact and regular  set $\Lambda\subset S_g^*M$, we say that $\Lambda$ is \emph{uniformly hyperbolic} if there exist $\theta\in(0,1)$ and  $m>0$ and a $DP_g^t$-invariant splitting $E_{\Lambda}^s\oplus E_{\Lambda}^u$ of  $T\Sigma_{\Lambda}$ such that for any $(x,p)\in\Lambda$ we have
\begin{center}
$\|DP_g^{m}(x,p)|_{E^s_{(x,p)}}
\|\leq \theta$ and $\|DP_g^{-{m}}(\varphi_g^{m}(x,p))|_{E^u_{\varphi_g^{m}(x,p)}}\|\leq\theta$.
\end{center}

A periodic point $(x,p)$ is called \textit{hyperbolic} if its whole orbit is a uniform hyperbolic set.
 Equivalently, a closed geodesic is hyperbolic if its transversal linear Poincar\'e flow on the period has no eigenvalue of modulus $1$ (notice that the eigenvalues are independent of the choice of the transversal and of the point in the closed orbit). If the eigenvalues are non-real and with modulus $1$ the closed orbit is said to be \emph{elliptic}, and if they are irrational we say that the orbit is \emph{irrationally elliptic}. 
 The \emph{parabolic} closed orbits have real eigenvalues equal to $1$ or $-1$.  It is well-known that for an open and dense subset of metrics on surfaces its geodesic flows display only elliptic or hyperbolic closed orbits (see \cite{C}).

 A {\em locally maximal} invariant set (or {\em isolated set}) is a compact subset 
$\Lambda \subset S_g^*M$ such that $\varphi_g^t(\Lambda)=\Lambda$ for all $t \in \Rr$ and there is a neighbourhood $U$ of $\Lambda$, called {\em isolating block}, such that
$\Lambda=\bigcap_{t \in \Rr} \varphi_g^t(U)$.


\section{Shadowing, weak shadowing and specification}
\label{section: shadowing}

In this section we introduce the dynamical properties that we shall deal with in the sequel.

\subsection{The shadowing property}

Let $\varphi_g^t\colon S_g^*M\to S_g^*M$ be the Hamiltonian geodesic flow associated to the metric $g \in \mathcal{R}^\infty(M)$.
The notion of shadowing developed in ~\cite[\S3.2]{BRT} can be adapted to the geodesic flow. Indeed, in our case it is easier because it is enough to consider a single energy level.

Let us fix real numbers $\delta, T>0$. We say that a pair of
sequences  $[(x_i,p_i), (t_i)]_{i \in \Z}$, where $(x_i,p_i) \in S_g^*M$, $t_i \in \R$, $t_i \geq T$, 
is a
$(\delta,T)$-{\em pseudo-geodesic} of  $\varphi_g^t$ if 
$$d(\varphi_g^{t_i}(x_i,p_i),(x_{i+1},p_{i+1})) < \delta \,\,\text{for all }\,\,i \in \Z.$$
For the sequence $(t_i)_{i \in \Z}$ we write
$\varsigma(n)=t_0+t_1+\ldots+t_{n-1}$ if  $n > 0$, 
$\varsigma(n)=-(t_n+\ldots+t_{-2}+t_{-1})$ if $n < 0$ and $\varsigma(0)=0$. 

Let $(x_0,p_0) \star t$ denote a point on a $(\delta,T)$-chain $t$ units time from $(x_0,p_0)$. More precisely, for $t \in \R$,
$$(x_0,p_0) \star t=\varphi_g^{t-\varsigma(i)}(x_i,p_i) \hspace{0.4cm} {\text{\rm if}} \hspace{0.4cm} \varsigma(i) \leq t <\varsigma(i+1).$$
By $\mbox{Rep}$ we denote the set of all increasing homemorphisms $\tau\colon \R \rightarrow \R$,
called (time) {\em reparameterizations}, satisfying $\tau(0)=0$. Fixing $\varepsilon>0$, we define the set
$$
\mbox{Rep}(\varepsilon)=\left\{\tau \in \mbox{Rep}: \left|\frac{\tau(t) -\tau(s)}{t-s}-1 \right|<\varepsilon, \, s, t \in \R \right\},
$$ 
of the reparameterizations $\varepsilon$-close to the identity.

A $(\delta,T)$-pseudo-geodesic $[(x_i,p_i), (t_i)]_{i \in \Z}$ is $\varepsilon$-{\em shadowed} by some true geodesic of $g$
 if there is $(\tilde x,\tilde p) \in S_g^*M$  and a reparameterization $\tau \in \mbox{Rep}(\varepsilon)$ such that 
\begin{equation}\label{shadow}
d(\varphi_g^{\tau(t)}(\tilde x,\tilde p),(x_0,p_0) \star t)<\varepsilon,\text{ for every }t \in \R.
\end{equation}
The Hamiltonian geodesic flow $\varphi_g^t$ is said to have the {\em shadowing property} if, for any $\varepsilon>0$ there exist $\delta,\, T>0$ such that
any $(\delta,T)$-pseudo-geodesic $[(x_i,p_i), (t_i)]_{i \in \Z}$ is $\varepsilon$-{shadowed} by some geodesic of $g$.
Finally, we say that the Hamiltonian geodesic flow $\varphi_g^t$  is {\em stably shadowable} if
there exists a $C^2$-neighborhood $\mathcal{V}$ of $g$ where for any $C^\infty$-metric $\hat{g} \in \mathcal{V}$ the flow $\varphi_{\hat{g}}^t$  has the shadowing property.

\subsection{The weak shadowing property} \label{wsp}

The shadowing property in the weak sense first appeared in a paper by Corless and Pilyugin (see ~\cite{CPi}) related to the genericity  of shadowing among homeomorphisms, with respect to the $C^0$-topology. In simple terms \emph{weak shadowing} allows to approximate  ``almost orbits"  by true orbits, if  one considers only the distance between the orbit and the ``almost orbit" as two subsets in the manifold, thus forgetting  the time parameterization. There exist dynamical systems without the weak shadowing property (see ~\cite[Example 2.12]{P}) and dynamical systems satisfying the weak shadowing property but not the shadowing one (\cite[Example 2.13]{P}).

We recall the following definition of weakly shadowable systems and observe that the first result related to ours was done by Sakai (see \cite{Sa} and the references therein). Given a Hamiltonian geodesic flow $\varphi_g^t\colon S_g^*M\to S_g^*M$ associated to the metric $g \in \mathcal{R}^\infty(M)$ and $\delta, T>0$, a
$(\delta,T)$-pseudo-geodesic $[(x_i,p_i), (t_i)]_{i \in \Z}$ is {\em weakly} $\varepsilon$-{\em shadowed} by some true geodesic of $g$
if there exists $(\tilde x, \tilde p) \in S_g^*M$ such that $\{(x_i,p_i)\}_{i \in \Z} \subset B_\varepsilon(\mathcal{O}(\tilde x, \tilde p))$, where $\mathcal{O}(\tilde x, \tilde p)$ stands for the orbit of $(\tilde x, \tilde p)$.

The Hamiltonian geodesic flow $\varphi_g^t$ is said to have the {\em weak shadowing property} if, for any $\varepsilon>0$ there exist $\delta, T>0$ such that
any $(\delta,T)$-pseudo-geodesic $[(x_i,p_i), (t_i)]_{i \in \Z}$ is weakly $\varepsilon$-shadowed by some geodesic of $g$.

Finally, we say that the Hamiltonian geodesic flow $\varphi_g^t$  is {\em stably weakly shadowable} if
there exists a $C^2$-neighbourhood $\mathcal{V}$ of $g$ where for any $C^\infty$-metric  $\hat{g} \in \mathcal{V}$ the flow $\varphi_{\hat{g}}^t$ has the weak shadowing property.

\subsection{The specification property}\label{spec}

Consider a Hamiltonian geodesic flow $\varphi_g^t\colon S_g^*M\to S_g^*M$ associated to the metric $g \in \mathcal{R}^\infty(M)$
 and a 
$\varphi_g^t$-invariant compact set $\Lambda \subset S_g^*M$.

A {\em specification} $\mathcal{S}=(\sigma,P)$ consists of a finite collection
$\sigma=\{I_1,\ldots,I_m\}$ of bounded disjoint intervals $I_i=[a_i,b_i]$
of the real line and a map $P\colon\bigcup_{I_i \in \sigma}I_i \rightarrow \Lambda$ such that for any $t_1,t_2 \in I_i$ we have
$$\varphi^{t_2}_g(P(t_1))=\varphi^{t_1}_g(P(t_2)).$$
The specification $\mathcal{S}$ is said to be $K$-{\em spaced} if $a_{i+1} \geq b_i+K$ for all $i \in \{1,\cdots,m\}$ and the minimal of such
$K$ is called the spacing of $\mathcal{S}$. If $\sigma=\{I_1,I_2\}$, then $\mathcal{S}$ is said to be a {\em weak specification}.
Given $\varepsilon>0$, we say that $\mathcal{S}$ is $\varepsilon$-{\em shadowed} by $(x,p) \in \Lambda$ if $d(\varphi_g^t(x,p),P(t))<\varepsilon$
for all $t \in \bigcup_{I_i \in \sigma}I_i$.

We say that $\Lambda$ has the {\em weak specification property} if for any $\varepsilon>0$
there exists a $K=K(\varepsilon) \in \Rr$ such that any $K$-spaced weak specification $\mathcal{S}$ is
$\varepsilon$-shadowed by a point of $\Lambda$. 
In this case the Hamiltonian geodesic flow $\varphi_g^t |_{\Lambda}$ is said to have the weak specification property.
The Hamiltonian geodesic flow $\varphi_g^t$ is said to have the weak specification property if $S_g^*M$ has it.

We say that the Hamiltonian geodesic flow $\varphi_g^t$ associated to $g$ has the {\em stable weak specification property} if
there exists a $C^2$-neighbourhood $\mathcal{V}$ of  $g$ where for any $C^\infty$-metric $\hat{g} \in \mathcal{V}$ the flow $\varphi_{\hat{g}}^t$  has the weak specification property.

\section{Statement of the results}\label{State}


Given $g \in \mathcal{R}^{\infty}(M)$, let
$$\mathscr{P}(g):=\{\gamma\colon \gamma \,\text{is a closed orbit under } \varphi_g^t\} $$
and
$$
Per(g):=\underset{\gamma\in\mathscr{P}(g), \, t\in\mathbb{R}}{\bigcup}{\gamma(t)}.$$

\begin{maintheorem}\label{thm1}
Let $M$ be a surface. If $g \in \mathcal{R}^\infty(M)$
and the Hamiltonian geodesic flow $\varphi_g^t$ satisfies one of the properties:
\begin{itemize}
\item [(a)] is stably shadowable; 
\item [(b)] is stably weak shadowable; 
\item [(c)] has the stable weak specification property;
\end{itemize}
then $\overline{Per(g)}$ is a uniformly hyperbolic set.
\end{maintheorem}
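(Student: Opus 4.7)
The strategy is to prove, under any of the three stability hypotheses (a)--(c), that every closed orbit of $\varphi_g^t$ must be hyperbolic; the uniform hyperbolicity of $\overline{Per(g)}$ then follows immediately from the result of Contreras and Paternain cited in the introduction, which asserts that on surfaces the closure of a set of hyperbolic closed geodesics is itself a uniformly hyperbolic set. So I argue by contradiction: suppose some closed orbit $\gamma$ of period $\ell$ carries a non-hyperbolic periodic point $(x,p)$. Since the Poincar\'e map $P_g$ is an area-preserving $C^1$ diffeomorphism of the two-dimensional symplectic section $\Sigma$, its eigenvalues at $(x,p)$ lie on the unit circle, so $(x,p)$ is either elliptic or parabolic.

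The next step is to reduce to an \emph{irrationally elliptic} point satisfying a twist condition by a $C^2$-small metric perturbation inside the stability neighbourhood $\VV$. Using Klingenberg--Takens type perturbation lemmas available for geodesic flows on surfaces, one can modify the jet of $P_g$ at $(x,p)$ so as to make the rotation number Diophantine and the leading twist coefficient of the Birkhoff normal form nonzero, while keeping the perturbed metric inside $\VV$ (a parabolic orbit is first pushed to the elliptic regime). Moser's twist theorem then yields a family of $P_g$-invariant closed curves in $\Sigma$ accumulating at $(x,p)$. Choosing one such curve and saturating it by $\varphi_g^t$ produces a $\varphi_g^t$-invariant topological $2$-torus $\mathcal{T}_\gamma\subset S_g^*M$ that separates a small tubular neighbourhood $W$ of $\gamma$ into two disjoint open $\varphi_g^t$-invariant sets $U_-$ and $U_+$.

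This separating torus is the obstruction to all three properties. Fix $\varepsilon>0$ smaller than the inner width of both $U_-$ and $U_+$, and let $\delta,T>0$ be the associated constants coming from the shadowing or specification hypothesis. For (a) and (b) I would build a $(\delta,T)$-pseudo-geodesic that follows an orbit inside $U_-$ for some time $t_i\geq T$, then jumps across $\mathcal{T}_\gamma$ by less than $\delta$ to a point of $U_+$, and then follows an orbit inside $U_+$; since every true geodesic is contained in a single $\varphi_g^t$-invariant set $U_-\cup\mathcal{T}_\gamma$ or $U_+\cup\mathcal{T}_\gamma$, no true geodesic can $\varepsilon$-shadow (even weakly) such a pseudo-geodesic, contradicting (a) and (b). For (c) the same topological obstruction forbids a single point from $\varepsilon$-shadowing a weak specification whose two blocks are placed deep inside $U_-$ and $U_+$ respectively. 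The main obstacle I anticipate is this perturbation step: because a metric perturbation supported near a point of $M$ affects the entire fiber in $S_g^*M$ and so is never truly local in phase space, one has to verify that the twist and Diophantine conditions at $\gamma$ can be realised by a $C^2$-small perturbation of the metric that still lies inside $\VV$; once this is secured, the topological argument based on the invariant torus applies uniformly to all three settings and produces the desired contradiction, after which the Contreras--Paternain theorem closes the proof.
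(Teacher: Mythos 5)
Your overall architecture matches the paper's: reduce to showing that the stability hypothesis forces all closed orbits to be hyperbolic (i.e. $g\in\mathscr{F}^2(M)$) by using a Franks-type perturbation lemma for geodesic flows on surfaces to turn a putative parabolic or elliptic orbit into an irrationally elliptic one while staying inside the stability neighbourhood $\mathcal{V}$, and then invoke the Contreras--Paternain theorem. Case (c) is also essentially right: an invariant torus obtained by saturating a KAM circle around the elliptic orbit bounds an invariant region and kills transitivity, hence topological mixing, hence weak specification (the paper routes this through a lemma of Arbieto--Senos--Sodero).

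There is, however, a genuine gap in your obstruction to shadowing in cases (a) and (b). Your pseudo-geodesic makes a single jump of size less than $\delta$ across the invariant torus $\mathcal{T}_\gamma$ and then follows a true orbit in $U_+$. But KAM circles accumulate on any such torus from both sides, so the true orbit you land on after the jump is trapped between $\mathcal{T}_\gamma$ and a nearby invariant circle at distance $O(\delta)$ from it; the entire pseudo-orbit therefore remains within $O(\delta)\ll\varepsilon$ of $\mathcal{T}_\gamma\cup U_-$, and it is perfectly well $\varepsilon$-shadowed (a fortiori weakly shadowed) by a true orbit lying on or near the torus. The observation that every true orbit lies in a single invariant component is beside the point: the shadowing orbit only needs to be $\varepsilon$-close to the pseudo-orbit, not in the same component as its pieces. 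To reach a contradiction the pseudo-orbit must drift a macroscopic distance --- far enough that it visits regions separated by a full invariant circle lying more than $2\varepsilon$ from each --- using only jumps of size $\delta$ spaced at least $T$ apart in time. This drift mechanism is the heart of the paper's proof of Proposition~\ref{elliptic}(a),(b): one fixes three invariant circles $\Gamma_0,\Gamma_1,\Gamma_2$, notes that no true orbit can come $\varepsilon'$-close to all three (it would have to cross the middle one), and then climbs from $\Gamma_0$ to $\Gamma_2$ by repeatedly making small jumps across the densely packed invariant circles and, crucially, by using the genuine dynamics to traverse the gaps in the set of invariant circles: Herman's theorem providing orbits that connect the two boundary circles of a Birkhoff zone of instability, and heteroclinic chains in the resonant gaps. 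Without this mechanism your pseudo-orbit never leaves an $O(\delta)$-neighbourhood of the torus and no contradiction with shadowing is obtained.
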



The proof of Theorem~\ref{thm1} is an immediate consequence of Theorems~\ref{thm2} and~\ref{thm3} below.

Consider the \emph{Ma\~n\'e star systems} defined by the $C^2$-interior of the metrics such that all closed orbits are hyperbolic:
$$\mathscr{H}(M):=\{g\in \mathcal{R}^{\infty}(M)\colon \text{any}\,\gamma\in\mathscr{P}(g)\,\text{is hyperbolic} \} 
$$
and
$$ 
\mathscr{F}^2(M):=\mbox{int}_{C^2} \mathscr{H}(M).$$ 
Clearly, $g \in \mathscr{F}^2(M)$ means that  $g \in \mathscr{H}(M)$  and for any $\hat g \in \mathcal{R}^{\infty}(M)$, $C^2$-arbitrarily close to $g$, we also have that  $\hat g \in \mathscr{H}(M)$. 

\begin{maintheorem}\label{thm2}
Let $M$ be a surface. If $g\in \mathcal{R}^\infty(M)$ and the Hamiltonian geodesic flow $\varphi_g^t$ satisfies one of the properties:
\begin{itemize}
\item [(a)] is stably shadowable; 
\item [(b)] is stably weak shadowable; 
\item [(c)] has the stable weak specification property;
\end{itemize} 
then $g \in \mathscr{F}^2(M)$.
\end{maintheorem}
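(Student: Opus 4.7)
I would prove Theorem~\ref{thm2} by contraposition. Suppose $g \notin \mathscr{F}^2(M)$: then some $\hat g \in \mathcal{R}^\infty(M)$ arbitrarily $C^2$-close to $g$ admits a non-hyperbolic closed orbit $\gamma_0$ of $\varphi_{\hat g}^t$. Since $M$ is a surface, the transversal linear Poincaré map along $\gamma_0$ is a $2 \times 2$ symplectic matrix whose eigenvalues lie on the unit circle. Using the surface-specific perturbation machinery of Contreras \cite{C}, a further $C^2$-small perturbation yields $\tilde g$ (still $C^2$-close to $g$) whose geodesic flow carries an irrationally elliptic closed orbit $\tilde \gamma_0$ with Poincaré map $P_{\tilde g}$ on the symplectic transversal $\Sigma$ satisfying the hypotheses of Moser's invariant curve theorem, namely a Diophantine rotation number and a non-vanishing first Birkhoff coefficient. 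If $g$ is stably shadowable, stably weakly shadowable, or has the stable weak specification property, with $C^2$-neighbourhood $\mathcal{V}$, I choose $\tilde g \in \mathcal{V}$, so that $\varphi_{\tilde g}^t$ inherits the same property.

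The KAM step then produces a sequence of $P_{\tilde g}$-invariant closed curves $\Gamma_n \subset \Sigma$ surrounding the elliptic fixed point and accumulating on it. Suspending each such $\Gamma_n$ under the flow $\varphi_{\tilde g}^t$ gives an invariant topological two-torus $T_n \subset S_{\tilde g}^*M$; for $n$ large, $T_n$ separates a tubular neighbourhood $W$ of $\tilde \gamma_0$ into two $\varphi_{\tilde g}^t$-invariant open connected components, which I call $U$ (the interior of $T_n$) and $V$ (the exterior within $W$).

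The obstruction to each of (a), (b), (c) is essentially the same. Fix $\varepsilon_0 > 0$ smaller than one quarter of the transversal widths of $U$ and $V$, and pick $q_1 \in U$, $q_2 \in V$ at distance more than $2\varepsilon_0$ from $T_n$, together with auxiliary $p_1 \in U$ and $p_2 \in V$ with $d(p_1, p_2)$ as small as I wish. For any $\delta, T > 0$, a $(\delta, T)$-pseudo-geodesic of $\varphi_{\tilde g}^t$ is assembled by following the orbit of $q_1$ for a long block, inserting a single $\delta$-jump through $(p_1, p_2)$ across $T_n$, and then following the orbit of $q_2$. For (a), any orbit $\varepsilon_0$-shadowing this pseudo-geodesic in the sense of \eqref{shadow} would have to enter both $U$ and $V$, contradicting the invariance of $T_n$. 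For (b), any single orbit whose $\varepsilon_0$-tube contains $\{q_1, q_2\}$ must meet both $U$ and $V$. For (c), the two-interval weak specification $\mathcal{S} = (\sigma, P)$ with $\sigma = \{I_1, I_2\}$, $P|_{I_1}$ a segment of the orbit of $q_1$ and $P|_{I_2}$ a segment of the orbit of $q_2$, admits no $\varepsilon_0$-shadowing point in $S_{\tilde g}^*M$, regardless of the spacing $K$.

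The main obstacle I anticipate is the perturbative step itself: realizing both the creation of an irrationally elliptic closed orbit and the twist non-degeneracy by a single $C^2$-small perturbation of the metric. As stressed in the remark following the Legendre transform discussion, metric perturbations are not local in $S_g^*M$, so adjusting the Birkhoff normal form of $P_{\tilde g}$ without destroying the closed orbit or leaving $\mathcal{V}$ is delicate; this is precisely what the two-dimensional closing and perturbation results of \cite{C} are designed to handle.
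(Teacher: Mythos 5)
Your overall architecture is the same as the paper's: argue by contraposition, use the Franks'-type perturbation for geodesic flows on surfaces to turn a non-hyperbolic closed orbit into an irrationally elliptic one inside the stability neighbourhood $\mathcal{V}$, and then show that the invariant circles around the elliptic fixed point of the (twist) Poincar\'e map obstruct the three properties. Your argument for case (c) is sound (and slightly more direct than the paper's, which routes through ``weak specification $\Rightarrow$ topologically mixing'' and then contradicts transitivity): a two-interval specification with $P|_{I_1}$ deep inside an invariant torus and $P|_{I_2}$ deep outside cannot be shadowed by any single orbit, since no orbit crosses the torus, and no smallness of jumps is required in the definition of a specification.

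The gap is in cases (a) and (b), precisely where the definition of a $(\delta,T)$-pseudo-geodesic forces every jump to have size less than $\delta$. Your pseudo-geodesic follows the orbit of $q_1$ (at distance more than $2\varepsilon_0$ inside $T_n$), makes ``a single $\delta$-jump through $(p_1,p_2)$'', and then follows the orbit of $q_2$ (at distance more than $2\varepsilon_0$ outside $T_n$). The transitions from the orbit of $q_1$ to $p_1$ and from $p_2$ to the orbit of $q_2$ are themselves jumps of size comparable to $\varepsilon_0$, not $\delta$, so for $\delta\ll\varepsilon_0$ this is not a $(\delta,T)$-pseudo-geodesic. If instead you take $q_1=p_1$ and $q_2=p_2$ both within $\delta$ of $T_n$, the object is a legitimate pseudo-geodesic but yields no contradiction: a true orbit on or near $T_n$ may well $\varepsilon_0$-shadow it. To get a contradiction one must build a pseudo-geodesic whose points genuinely traverse a gap of size $\gg\varepsilon$ in the radial coordinate, using only $\delta$-jumps separated by time at least $T$; and the difficulty is that between jumps the real twist-map dynamics acts, and inside a Birkhoff zone of instability it can undo the radial progress gained by a jump. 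This is exactly why the paper's proof of Proposition~\ref{elliptic}(a),(b) is delicate: it fixes three invariant circles $\Gamma_0,\Gamma_1,\Gamma_2$, sets $\varepsilon'$ to half their mutual distance, and marches the pseudo-orbit outward by distinguishing whether the current invariant curve is surrounded by further invariant circles (where a $\delta$-jump makes irreversible progress), is part of a heteroclinic chain (follow the stable manifold and jump near the hyperbolic point), or bounds a Birkhoff zone of instability, in which case Herman's connecting-orbit result (Proposition~\ref{KAM2}) supplies a genuine orbit segment crossing the zone for the pseudo-orbit to ride. Your proposal contains no substitute for this mechanism, so the key step of (a) and (b) — exhibiting, for \emph{every} $\delta,T>0$, a $(\delta,T)$-pseudo-geodesic reaching both $\Gamma_0$ and $\Gamma_2$ — is missing.
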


The proof is contained in section~\ref{proofs}.
We point out that the proof of the Hamiltonian version of this theorem in ~\cite{BRT} uses a suspension theorem ~\cite{BD2} which is unavailable for geodesic flows.


\begin{maintheorem}\label{thm3} (\cite[Theorem D]{CP})
Let $M$ be a surface. If $g \in \mathscr{F}^2(M)$, then $\overline{Per(g)}$ is a uniformly hyperbolic set.
\end{maintheorem}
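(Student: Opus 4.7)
The plan is a Mañé-type contradiction argument adapted to the geodesic-flow setting. Suppose $g \in \mathscr{F}^2(M)$ but that $\overline{Per(g)}$ is not uniformly hyperbolic for the transversal linear Poincaré flow $DP^t_g$. I will try to derive a $C^2$-small perturbation $\hat g$ of $g$ possessing a non-hyperbolic closed orbit, contradicting the definition of $\mathscr{F}^2(M)$.

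\textbf{Step 1: A Franks-type perturbation lemma for metrics on surfaces.} The central technical tool I would establish is: given a closed geodesic $\gamma$ of $g$ with Poincaré map $P_g$ at $(x,p)$, and any area-preserving linear map $L$ that is $C^2$-close to $DP_g(x,p)$, one can find a $C^2$-small perturbation $\hat g$ of $g$, supported in a thin tubular neighbourhood of $\gamma$, such that $\gamma$ remains a closed geodesic of $\hat g$ with $DP_{\hat g}(x,p)=L$. The mechanism is that the curvature along $\gamma$ controls the Jacobi equation, which in turn computes $DP_g$; hence compactly supported bumps in the curvature profile along $\gamma$ realize prescribed perturbations inside $Sp(2,\Rr)=SL(2,\Rr)$, in particular allowing one to push eigenvalues onto the unit circle.

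\textbf{Step 2: Uniform hyperbolicity on $Per(g)$.} Using Step 1 together with $g \in \mathscr{F}^2(M)$, I would show uniform bounds: there exist $m \in \Nn$ and $\theta \in (0,1)$ such that every $(x,p) \in Per(g)$ satisfies the hyperbolic estimates in the definition of uniform hyperbolicity, and additionally the angle between $E^s$ and $E^u$ is bounded below uniformly. Indeed, if the contraction/expansion rates were not uniform, one could find a sequence of closed geodesics whose Poincaré derivatives have eigenvalues approaching the unit circle; an application of the Franks lemma would then produce a $C^2$-nearby metric $\hat g$ with a parabolic or elliptic closed orbit. The same argument rules out arbitrarily small angles, via a small rotational perturbation that collapses $E^s$ onto $E^u$.

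\textbf{Step 3: Extension to the closure via $2$-dimensional symplectic geometry.} The uniform estimates and the uniform angle bound imply that $E^s \oplus E^u$ is continuous on $Per(g)$ and thus extends continuously and invariantly to $\overline{Per(g)}$, preserving all estimates. Because $\dim \Sigma = 2$ and $DP^t_g$ preserves area, any continuous invariant splitting of $T\Sigma_{\overline{Per(g)}}$ into two one-dimensional line bundles with one of them contracting forces the complementary bundle to expand at the reciprocal rate. Hence the extended splitting is automatically hyperbolic, proving that $\overline{Per(g)}$ is a uniformly hyperbolic set.

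\textbf{Main obstacle.} The crux, and the principal difficulty, is Step~1. As stressed in the Remark of \S\ref{section geodesic flow}, perturbations of $g$ are never local in $S_g^*M$: altering $g$ on $U \subset M$ modifies the flow on every fiber above $U$. One therefore cannot import directly the local perturbation machinery that is standard for general Hamiltonian flows. Instead the argument must exploit the specific geometric coupling between the metric, its curvature and the Jacobi equation along a single geodesic, so as to prescribe $DP_g$ at one point without spoiling $g \in \mathscr{H}(M)$ via the creation of other, uncontrolled, non-hyperbolic closed geodesics. Carrying this out in the $C^2$-topology on metrics (rather than the $C^1$-topology on vector fields) is the delicate construction of Contreras-Paternain upon which the theorem rests.
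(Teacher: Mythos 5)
The paper does not prove this statement at all: Theorem~\ref{thm3} is imported verbatim from Contreras--Paternain \cite[Theorem D]{CP}, and the authors' entire ``proof'' is that citation. So there is nothing in the paper to compare your argument against line by line. What can be said is that your outline correctly identifies the strategy actually used in the cited reference (and in Contreras \cite{C}): a Franks-type perturbation lemma for the derivative of the Poincar\'e map of a closed geodesic, a Ma\~n\'e-type argument upgrading ``every periodic orbit of every nearby metric is hyperbolic'' to uniform estimates on $Per(g)$, and the extension of the resulting dominated splitting to $\overline{Per(g)}$, where two-dimensionality and area-preservation of $DP_g^t$ convert domination into hyperbolicity. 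Step 3 as you state it is essentially sound.

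Read as a proof rather than a roadmap, however, there are two genuine gaps. First, your Step 1 is itself a hard theorem --- the Franks lemma for geodesic flows on surfaces, proved in \cite{CP} and reproved in \cite{V} --- precisely because, as you note, metric perturbations are never local in $S_g^*M$; the paper itself only invokes it as a black box (Lemma~\ref{local perturbation}). You cannot treat it as an exercise in bumping the curvature: one must control the effect on all geodesics crossing the support, and the construction in the $C^2$ topology on metrics occupies a substantial part of \cite{CP}. (Incidentally, your worry about ``spoiling $g\in\mathscr{H}(M)$'' by creating other non-hyperbolic orbits is misplaced in the contradiction direction: you \emph{want} the perturbed metric to have a non-hyperbolic closed orbit, so extra ones do no harm.) Second, the pivot of Step 2 --- ``if the rates were not uniform, one could find closed geodesics whose Poincar\'e derivatives have eigenvalues approaching the unit circle'' --- is not a valid implication. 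All eigenvalues can stay uniformly away from the unit circle while the intermediate products $DP_g^{t}$ along the orbit fail every uniform estimate. Ma\~n\'e's actual argument perturbs the linear maps at many returns along long periodic orbits (the lemma on uniformly hyperbolic families of periodic sequences of linear maps) to produce, after finitely many Franks-type perturbations, a genuinely non-hyperbolic closed orbit of a nearby metric. Without that machinery, Step 2 does not close, and hence neither does the proof.
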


Notice also that the general Hamiltonian version of Theorem~\ref{thm3} contained in \cite{BRT} is stronger because it requires the closing lemma, unknown for geodesic flows.


\section{Proof of Theorem~\ref{thm2}}\label{proofs}

Theorem ~\ref{thm2} is an immediate consequence of the following results.

\begin{proposition}\label{elliptic}
Let $M$ be a surface and $g \in \mathcal{R}^\infty(M)$. If the Hamiltonian geodesic flow $\varphi_g^t$ satisfies one of the properties:
\begin{itemize}
\item [(a)] is shadowable; 
\item [(b)] is weak shadowable; 
\item [(c)] has the weak specification property;
\end{itemize}
then there are no irrationally elliptic closed orbits.
\end{proposition}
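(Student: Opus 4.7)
The plan is to argue by contradiction: suppose $\gamma\subset S_g^*M$ is an irrationally elliptic closed orbit of $\varphi_g^t$, construct a family of pairwise disjoint $\varphi_g^t$-invariant open sets accumulating on $\gamma$, and use it to defeat each of (a), (b) and (c).

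First I would reduce to the Poincar\'e map. Let $\ell>0$ be the period of $\gamma$, fix a transversal $\Sigma$ at some $q\in\gamma$, and consider $P_g=P_g^\ell\colon \Sigma\to\Sigma$. Since $\dim M=2$, $\Sigma$ is $2$-dimensional and $P_g$ is a $C^\infty$ area-preserving diffeomorphism with an elliptic fixed point at $q$ whose eigenvalues are $e^{\pm 2\pi i\alpha}$ with $\alpha\in\Rr\setminus\mathbb{Q}$. Invoking the twist property of the Poincar\'e map about a closed geodesic on a surface (the classical fact highlighted in the introduction) together with Moser's invariant-curve theorem, I would obtain a sequence of pairwise disjoint $P_g$-invariant closed curves $\{C_n\}_{n\in\N}$ surrounding $q$ and accumulating on it. Saturating by the flow, each $C_n$ becomes a $\varphi_g^t$-invariant $2$-torus $\mathbb{T}_n\subset S_g^*M$ and each open annulus between $C_n$ and $C_{n+1}$ becomes a $\varphi_g^t$-invariant open set $\hat A_n\subset S_g^*M$. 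The $\{\hat A_n\}$ are pairwise disjoint, lie in arbitrarily small neighbourhoods of $\gamma$, and, crucially, no orbit of $\varphi_g^t$ starting in $\hat A_n$ can leave $\hat A_n$.

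Armed with this structure, I would rule out (a), (b) and (c) by a single construction. Fix $\varepsilon>0$ small, pick two consecutive regions $\hat A_n, \hat A_{n+1}$ inside the $\varepsilon$-neighbourhood of $\gamma$, and choose $w_1\in\hat A_n$, $w_2\in\hat A_{n+1}$ with $d(w_1,w_2)>4\varepsilon$. For (a), given arbitrary $\delta,T>0$, glue a $(\delta,T)$-pseudo-geodesic that alternates long segments of the orbits of $w_1$ and $w_2$, bridged by jumps of size less than $\delta$ that cross the thin torus $\mathbb{T}_{n+1}$; since any true orbit is trapped in a single $\hat A_k$ it cannot approach both $w_1$ and $w_2$ to within $\varepsilon$, so no reparametrised geodesic satisfies~(\ref{shadow}). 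For (b), the same pseudo-geodesic cannot be weakly $\varepsilon$-shadowed, as $\{w_1,w_2\}\subset B_\varepsilon(\mathcal{O}(\tilde x,\tilde p))$ would force $\mathcal{O}(\tilde x,\tilde p)$ to meet both $\hat A_n$ and $\hat A_{n+1}$. For (c), given $K=K(\varepsilon)$ supplied by the weak-specification property, form a $K$-spaced weak specification $\mathcal{S}=(\{I_1,I_2\},P)$ with $P$ tracing the orbit of $w_i$ on $I_i$; the same invariance argument precludes $\varepsilon$-shadowing of $\mathcal{S}$.

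The principal obstacle will be the KAM step: Moser's theorem requires the twist coefficient of the Birkhoff normal form of $P_g$ at $q$ to be non-degenerate. For geodesic flows on surfaces this is the classical twist property referenced in the introduction and available in the literature on closed geodesics; in borderline cases one would fall back on R\"ussmann-type refinements that dispense with the twist assumption at the price of a Diophantine condition on $\alpha$. Once the invariant tori $\mathbb{T}_n$ and the partition $\{\hat A_n\}$ are in hand, the pseudo-orbit and specification constructions above are essentially formal.
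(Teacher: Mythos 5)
Your overall strategy --- invariant circles around the irrationally elliptic fixed point of the Poincar\'e map trap every true orbit and thereby obstruct shadowing --- is the same as the paper's, but two steps of your execution do not work as written. The first is the separation argument. You fix $\varepsilon$ and then choose the regions $\hat A_n,\hat A_{n+1}$ \emph{inside} the $\varepsilon$-neighbourhood of $\gamma$, so the radial distance between any two of your invariant sets is less than $2\varepsilon$; the extra separation $d(w_1,w_2)>4\varepsilon$ is necessarily along the flow direction. Hence a true orbit trapped in $\hat A_k$ \emph{can} pass within $\varepsilon$ of both $w_1$ and $w_2$ (it winds around the same thin solid torus and visits every angular position), and nothing prevents it from $\varepsilon$-shadowing your pseudo-geodesic, whose two pieces are in any case followed at different times. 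The quantifiers must go the other way: fix \emph{three} invariant circles $\Gamma_0,\Gamma_1,\Gamma_2$ first and only then set $\varepsilon=\tfrac12\min_{i\neq j}d(\Gamma_i,\Gamma_j)$, so that any true orbit, confined to one side of $\Gamma_1$, stays at distance at least $2\varepsilon$ from $\Gamma_0$ or from $\Gamma_2$, while the pseudo-orbit visits both. This is what the paper does; note that this correction would also make your direct construction for (c) legitimate (and arguably simpler than the paper's route for (c), which goes through weak specification $\Rightarrow$ topological mixing $\Rightarrow$ no invariant circles).

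The second gap is the construction of the $(\delta,T)$-pseudo-geodesic itself. Between your circles $C_n$ and $C_{n+1}$ there is in general a further Cantor-like, positive-measure family of invariant circles, and the complementary gaps are Birkhoff zones of instability whose width has nothing to do with $\delta$. A jump of size less than $\delta$ cannot cross such a zone, the real orbit of $w_1$ need not come $\delta$-close to the outer boundary of the sub-region containing it, and repeated small jumps do not help because between jumps the dynamics may carry the pseudo-orbit back inward. So ``jumps of size less than $\delta$ that cross the thin torus'' is not available: the torus is thin, but reaching it is the problem. To move the pseudo-orbit radially outward one must alternate $\delta$-jumps with genuine orbit segments that traverse each zone of instability, and the existence of such connecting orbits is a theorem (Birkhoff--Mather--Herman; Proposition~\ref{KAM2} in the paper), not a consequence of thinness. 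The paper's proof of (a) and (b) is essentially the careful implementation of this crossing procedure, together with the bookkeeping that keeps the jump times spaced by at least $T$.
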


Let $M$ be a surface.
Given a simple closed curve $\gamma\in T^*M$, we define the set of $C^\infty$- metrics that have $\gamma$ as an orbit of $\varphi_g^t$ by
$$
\mathcal{R}_\gamma^\infty(M)=\{g\in\mathcal{R}^\infty(M)\colon \gamma\in \mathscr{P}(g)\}.
$$
Endow this set with the $C^2$-topology and let
$$
B_{\varepsilon,\gamma}(g,D) = \{g'\in \mathcal{R}_\gamma^\infty(M)\colon
\|g'-g\|_{C^2}<\varepsilon, \,g=g'\text{ on } D\}.
$$

Moreover, for any $g\in\mathcal{R}_\gamma^\infty(M)$ consider the map 
$$
T_\gamma\colon g\mapsto \tr {DP_g}|_\gamma
$$ 
that gives the trace of the transversal linear Poincar\'e flow at $\gamma$.
Below we use also the notation $B_\delta(a)=\{y\in\Rr\colon |y-a|<\delta\}$.

\begin{lemma}\label{local perturbation}
Let $\varepsilon>0$, $g\in\mathcal{R}^\infty(M)$ and $\gamma$ be a closed orbit for $\varphi_g^t$. 
Then, there is $\delta>0$ such that for any tubular neighbourhood $W\subset M$ of $\widetilde\pi\gamma$,
$$
B_\delta(T_\gamma (g))\subset T_\gamma(B_{\varepsilon,\gamma}(g,D)),
$$
where $D=(M\setminus W)\cup \widetilde\pi\gamma$.

\end{lemma}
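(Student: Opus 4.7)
The plan is to reduce the problem to perturbing the coefficient of a scalar Jacobi equation by working in Fermi coordinates along the closed geodesic $c=\tilde\pi\gamma$. In those coordinates $(t,n)$, the metric takes the form $g=dn^2+\psi(t,n)^2\,dt^2$ with the normalisation $\psi(t,0)=1$ and $\partial_n\psi(t,0)=0$, the curve $c$ is $\{n=0\}$, and the Gaussian curvature along $c$ is $K(t)=-\partial_n^2\psi(t,0)$. Because $M$ is a surface, the transversal linear Poincar\'e flow at $\gamma$ is governed by the scalar Jacobi equation $J''+K(t)J=0$, so $T_\gamma(g)$ is simply the trace of the period-$\ell$ monodromy of this ODE, and I only have to show that this trace can be moved through an interval around $T_\gamma(g)$ by small, localised perturbations of $\psi$.

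Next I would produce an explicit one-parameter family of admissible perturbations. Fix $r>0$ small enough that the normal tube of radius $r$ around $c$ sits inside $W$, pick a smooth bump $\chi\colon\mathbb{R}\to[0,1]$ with $\chi(0)=1$ and support in $(-1,1)$, and for a smooth $\ell$-periodic function $\eta$ and $s\in\mathbb{R}$ set
$$
h_s(t,n)=\tfrac{s}{2}\,\eta(t)\,n^{2}\,\chi(n/r).
$$
Replacing $\psi$ by $\psi+h_s$ defines a metric $g_s$. The equalities $h_s(t,0)=0=\partial_n h_s(t,0)$ keep $c$ a unit-speed geodesic, so $g_s\in\mathcal{R}^\infty_\gamma(M)$; the support condition gives $g_s\equiv g$ on $D=(M\setminus W)\cup c$; and the $n^2$ prefactor makes the terms $n^2\chi''(n/r)/r^2$ and $n\chi'(n/r)/r$ stay $O(1)$ on $|n|<r$, yielding $\|h_s\|_{C^2}\le C|s|\,\|\eta\|_{C^2}$ with $C$ independent of $r$. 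Hence $g_s\in B_{\varepsilon,\gamma}(g,D)$ whenever $|s|$ is small enough, uniformly over all admissible tubes $W$.

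Since $\partial_n^2 h_s(t,0)=s\eta(t)$, the perturbed curvature along $c$ is $K_s(t)=K(t)-s\eta(t)$, and $T(s):=T_\gamma(g_s)$ is the trace of the monodromy of $J''+K_s(t)J=0$. Letting $\Phi_0$ denote the fundamental solution of $J''+KJ=0$, the standard variation-of-parameters formula gives
$$
T'(0)=\int_0^\ell \alpha(u)\,\eta(u)\,du,
$$
for a kernel $\alpha$ computable explicitly from the entries of $\Phi_0$ and $\Phi_0(\ell)$. A short inspection shows that $\alpha\equiv 0$ if and only if $\Phi_0(\ell)=\pm I$; outside that very rigid case one chooses $\eta$ making $T'(0)\neq 0$, and the degenerate case, where $T_\gamma(g)=\pm 2$, is handled by a two-parameter perturbation whose second-order expansion in $s$ mirrors the indefinite Killing form on $\mathfrak{sl}_2(\mathbb{R})$ and therefore produces trace values on both sides of $\pm 2$.

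With a direction of non-trivial derivative secured, $s\mapsto T(s)$ is a continuous, non-constant, real-analytic function near $0$, so its image over $|s|\le s_0$ contains an open interval around $T_\gamma(g)$, and this interval furnishes a $\delta>0$, depending only on $\varepsilon$, $g$ and $\gamma$, such that $B_\delta(T_\gamma(g))\subset T_\gamma(B_{\varepsilon,\gamma}(g,D))$. The two delicate points I expect are the $r$-uniform $C^2$-bound on $h_s$, which is what makes $\delta$ independent of the tubular neighbourhood $W$, and the verification that the first-order (or, in the degenerate $\pm I$ case, second-order) variation of the trace of the monodromy is non-trivial; everything else is essentially bookkeeping in the Fermi frame.
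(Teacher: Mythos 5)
The paper does not prove this lemma from first principles: its entire proof is a citation of the Franks' lemma for geodesic flows on surfaces (Contreras--Paternain \cite{CP} and Visscher \cite{V}), which asserts that the derivative of the Poincar\'e map along a closed geodesic can be moved through a neighbourhood in $\Sp(1)$ by a $C^2$-small perturbation of the metric supported in an arbitrarily thin tube and fixing the geodesic. What you have written is essentially a reconstruction of the proof of that cited black box, and it follows the same lines as those references: Fermi coordinates, a perturbation of the form $\tfrac{s}{2}\eta(t)n^2\chi(n/r)$ so that only $\partial_n^2$ along $\{n=0\}$ is affected, the $r$-uniform $C^2$ bound coming from the $n^2$ prefactor (which is exactly what makes $\delta$ independent of $W$), and the first-variation formula for the trace of the monodromy of the Jacobi equation. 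Your identification of the two genuinely delicate points is accurate, and your verification that $\alpha\equiv 0$ forces $\Phi_0(\ell)=\pm I$ is correct (the direction $[\Phi_0(u)^{-1}e_2]$ is non-constant in $u$, so otherwise $\Phi_0(\ell)$ would have infinitely many eigendirections).

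Two points remain under-argued relative to a complete proof. First, the degenerate case $DP_g=\pm I$ is only gestured at; it cannot be discarded here, because the lemma is applied in the paper precisely to parabolic orbits, whose trace is $\pm2$. Your Killing-form idea does close this gap if carried out: writing $\Phi_s(\ell)=\pm(I+sX+s^2Z+O(s^3))$ with $X=\int_0^\ell\eta(u)\,\Phi_0(u)^{-1}E\,\Phi_0(u)\,du$, unimodularity forces $\tr Z=\tfrac12\tr(X^2)$, so $\tr\Phi_s(\ell)=\pm\bigl(2+\tfrac{s^2}{2}\tr(X^2)\bigr)+O(s^3)$; since the integrand traces out non-proportional null directions of the form $X\mapsto\tr(X^2)$ on $\mathfrak{sl}_2(\Rr)$, two bumps for $\eta$ with suitable signs produce $\tr(X^2)$ of either sign, hence trace values on both sides of $\pm2$ and, by continuity, a full interval. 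You should say this much explicitly rather than invoke the Killing form by name. Second, you nowhere address self-intersections of the projected geodesic $\widetilde\pi\gamma$: when the closed geodesic is not simple in $M$, Fermi coordinates and your formula for $h_s$ are multi-valued near the crossing points, and the perturbed metric is not well defined there. The standard fix (also needed in \cite{CP,V}) is to support $\eta$ in $t$ away from the finitely many self-intersection parameters, after checking that the kernel $\alpha$ does not vanish identically on the complement; without some such remark the construction as written only covers simple closed geodesics.
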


\begin{proof}
This follows from the version of the Franks' lemma for geodesic flows on surfaces~\cite{CP,V}.
\end{proof}

If we have a parabolic or elliptic closed orbit of the Hamiltonian geodesic flow for a given metric, by Lemma~\ref{local perturbation} there is a nearby metric with the same closed orbit but irrationally elliptic. Proposition~\ref{elliptic} then implies that the shadowing properties can not stably hold. 
This proves Theorem~\ref{thm2}.

It remains to show Proposition~\ref{elliptic}.


\subsection{Proof of Proposition~\ref{elliptic} \mbox{(a)} and \mbox{(b)}}
\label{section proof prop}

The existence of an irrationally elliptic closed orbit implies the existence of invariant curves around the corresponding fixed point of the Poincar\'e map. These curves split the Poincar\'e section and are a clear obstacle to shadowing.
We present below the details of the proof.

Assume that $\varphi_g^t$ has an irrationally elliptic closed point $(x,p)$ which corresponds to a fixed point of the Poincar\'e map $P_g$ defined on a transversal section $\Sigma$ at $(x,p)$.

Since the eigenvalues of $DP_g(x,p)$ are irrational (non-resonant), the Birkhoff normal form theorem gives us a good coordinate transformation on a small neighbourhood $U$ of $(x,p)$ that reduce $P_g$ in $U\cap \Sigma$ to an area-preserving twist map (see e.g. \cite[Proposition 38.4]{Gole} or \cite{Mo}).
Moreover, by introducing symplectic polar coordinates, we obtain an area-preserving twist map $Q_g=h\circ P_g\circ h^{-1}$ on $\Aa=\Tt \times [0,+\infty)$, $\Tt=\Rr/\Zz$, where $h\colon \Sigma\to\Aa$ is the full coordinate transformation.
This map yields all the dynamics in the neighbourhood of the elliptic fixed point ($r=0$) and it is given by
\begin{equation}\label{twist}
Q_g(\theta,r)= (\theta + \tau r+ F(\theta,r) \bmod 1, 
r+ G(\theta,r)),
\end{equation}
where $\tau\not=0$, and $F,G$ are small $C^\infty$ functions of order $r$. 
Notice that $r=0$ is a segment of fixed points.

It is well-known that any invariant Jordan curve $\Gamma$ homotopically nontrivial is the graph of a Lipschitz function $\psi\colon\Tt\to\Rr$, i.e. 
$$
f(\theta,\psi(\theta))=(\phi(\theta),\psi\circ\phi(\theta)),
\quad
\theta\in\Tt,
$$ 
where $\phi$ is a homeomorphism of $\Tt$ with rotation number $\rho(\Gamma)$.
We call such sets {\em invariant circles}.

Denote the set of all invariant circles by $K$. This set is not empty, in fact it contains a positive measure set consisting of smooth curves given by KAM theory (c.f. e.g.~\cite{Moser}) with all points in $r=0$ being density points.
This implies that in any neighborhood of $r=0$ there is a smooth invariant circle with a diophantine rotation number (KAM circle). 
In addition, every KAM circle is accumulated from above and below by other KAM circles.

If a connected component of the complement of $K$ is homeomorphic to an annulus, i.e. the boundary is the union of two disjoint invariant curves $\Gamma_-,\Gamma_+$ (which can not be KAM circles), it is called a {\em Birkhoff zone of instability}.
Otherwise, the connected component of the complement of $K$ corresponds to a chain of heteroclinic orbits to hyperbolic periodic points. Thus, the boundaries are invariant curves with the same rational rotation number, intersecting at the hyperbolic periodic points (see e.g.~\cite{Herman,Calvez}).
This last case does not hold for generic area-preserving twist maps.

The result below guarantees the existence of orbits whose closure connects the boundaries of a given Birkhoff zone of instability.

\begin{proposition}(Herman~\cite[\S 5.9.2, \S 5.9.3, \S 5.9.4]{Herman})\label{KAM2}
Consider a Birkhoff zone of instability bounded by the invariant circles $\Gamma_-$ and $\Gamma_+$ of any rotation number and let $(\theta,r)\in\Gamma_-$.
Then, for every neighbourhood $W$ of $(\theta,r)$ we have that
$$
\Gamma_+ \cap \overline{\bigcup_{n\in\Zz}Q_g^n(\theta,r)(W)} \not=\emptyset.
$$
\end{proposition}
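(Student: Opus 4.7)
I would argue by contradiction, leveraging the rigidity of invariant essential curves in area-preserving monotone twist maps. Assume there exists $(\theta_0,r_0)\in\Gamma_-$ and a neighborhood $W$ of $(\theta_0,r_0)$ in $\Aa$ with $\Gamma_+\cap\overline{\bigcup_{n\in\Zz}Q_g^n(W)}=\emptyset$. Set $U=\bigcup_{n\in\Zz}Q_g^n(W)$. Then $U$ is open and $Q_g$-invariant, $\overline{U}$ meets $\Gamma_-$, but by hypothesis $\overline{U}$ lies at positive distance from $\Gamma_+$ inside the closed annular region $Z$ bounded by $\Gamma_-\cup\Gamma_+$.

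The plan is to extract from this configuration an invariant essential circle strictly between $\Gamma_-$ and $\Gamma_+$, thereby contradicting the defining property of a Birkhoff zone of instability. Let $V$ be the connected component of $Z\setminus\overline{U}$ that contains a one-sided neighborhood of $\Gamma_+$ and, for each $\theta\in\Tt$, set $\psi(\theta)=\inf\{s\colon(\theta,s)\in V\}$. Because $Q_g$ preserves $\overline{U}$ and $\Gamma_+$, it permutes the components of $Z\setminus\overline{U}$; since $V$ is distinguished among them by being adjacent to $\Gamma_+$, we get $Q_g(V)=V$. The lower topological frontier $\Lambda=\{(\theta,\psi(\theta))\colon\theta\in\Tt\}$ is then a closed $Q_g$-invariant subset of $\overline{U}$ which surjects onto $\Tt$ and separates $\Gamma_-$ from $\Gamma_+$ in $Z$.

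The decisive step is then Birkhoff's theorem on invariant separating sets in monotone twist maps (cf.~\cite{Herman}): the lower boundary of an invariant upper component is the graph of a Lipschitz function, and that graph is an invariant essential circle lying strictly between $\Gamma_-$ and $\Gamma_+$. Such a circle contradicts the very definition of a Birkhoff zone of instability, whose interior contains no invariant essential circle. This contradiction forces the claimed accumulation of the iterates of $W$ on $\Gamma_+$.

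The main obstacle, and where the bulk of Herman's technical work lies, is the Lipschitz-graph conclusion for $\Lambda$. This is precisely where the monotone twist condition $\tau\ne 0$ in~\eqref{twist} becomes indispensable: it is what prevents the $Q_g$-invariant frontier from folding back on itself in the radial coordinate and what produces a uniform Lipschitz bound for $\psi$. Without the strict twist, the separating invariant set could fail to be a graph — an Aubry--Mather-like Cantor set, or something wilder — and the direct contradiction via Birkhoff's theorem would no longer be available.
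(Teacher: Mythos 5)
The paper offers no proof of this proposition: it is imported verbatim from Herman's Astérisque volume, so there is nothing internal to compare against. Your sketch is, in outline, exactly the classical Birkhoff-theory argument that underlies Herman's proof (and Le Calvez's treatment): if the invariant open set $U=\bigcup_{n\in\Zz}Q_g^n(W)$ stayed away from $\Gamma_+$, the component of the complement adjacent to $\Gamma_+$ would produce, via Birkhoff's graph theorem, an invariant essential circle meeting the open zone, contradicting the definition of a zone of instability. That is the right strategy and the right key lemma.

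Two points in your write-up are glossed over and should be repaired. First, your intermediate claim that $\Lambda=\{(\theta,\psi(\theta))\}$ is \emph{a priori} closed and $Q_g$-invariant is unjustified: the lower envelope of an invariant open set is not invariant for a general area-preserving map (vertical fibres are not preserved), and $\psi$ has no continuity before the twist condition is exploited. The statement that $V$ is the strict supergraph of a Lipschitz function is the \emph{conclusion} of Birkhoff's theorem, not a stepping stone toward it; moreover, to apply that theorem one must first replace $V$ by its ``filled'' version $\hat V$ (adjoin the components of $Z\setminus V$ not meeting $\Gamma_-$) so as to have an essential, relatively simply connected invariant open set, since a connected component of $Z\setminus\overline U$ may have infinitely many holes and may even touch $\Gamma_-$. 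Second, you must check that the resulting invariant circle actually enters the \emph{open} zone, i.e.\ is not $\Gamma_-$ itself: this follows because $W\cap Z$ contains a relatively open half-neighbourhood of $(\theta_0,r_0)$ lying in $\overline U$ and connected to $\Gamma_-$, so $\hat\psi(\theta_0)$ is strictly above $\Gamma_-$ while remaining strictly below $\Gamma_+$. With these standard adjustments your argument is the correct (and essentially the only known) proof.
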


We will show that there are pseudo-orbits in a neighbourhood of $r=0$ which are not possible to shadow.
Take $\Gamma_0=\{r=0\}$, two KAM circles $\Gamma_1$ and $\Gamma_2$ near $\Gamma_0$
and
$$
\varepsilon'=\frac12
\min_{i\not=j}d(\Gamma_i,\Gamma_j).
$$
The above curves are graphs of the Lipschitz functions $\psi_i$ with $\psi_0=0<\psi_1<\psi_2$.

Any orbit can not be $\varepsilon'$-close to more than two of the above invariant sets.
We are going to construct a pseudo-orbit which reaches the three curves.

Start at any point $(\theta_0,r_0)\in\Gamma_0$, i.e. $r_0=0$.
Let the backward pseudo-orbit be the real backward orbit $(\theta_n,r_n)=(\theta_0,0)\in\Gamma_0$ for $n<0$.
The forward pseudo-orbit will be built in order to end up at or above $\Gamma_2$ and that the ``jumps'' (distance at the same iterate between the real orbit and the pseudo-orbit) occur only at isolated times spaced by any given interval.

Consider the canonical projection $\pi(\theta,r)=r$.
We choose $(\theta_1,r_1)$ to be $\delta'$-close to $Q_g(\theta_0,r_0)$ and 
$$
0 < r_1-\pi\circ Q_g(\theta_0,r_0) < \delta',
$$
so that $(\theta_1,r_1)$ is either in $\Gamma_1$ (in case $\delta'$ is large enough) or else it is in an invariant curve $\Gamma$ strictly between $\Gamma_0$ and $\Gamma_1$.

If $\Gamma$ is not a lower boundary of an instability zone or part of a chain of hyperbolic heteroclinic orbits, the pseudo-orbit can stay in $\Gamma$ as long as required and at any iterate increase the $r$-component up to $\delta'$ repeating the above procedure.

If $\Gamma$ is an invariant circle part of a chain of hyperbolic heteroclinic orbits, choose the pseudo-orbit as the real orbit that follows the dynamics throught the stable manifold as long as required. Close to the hyperbolic periodic point jump to the region above the chain.

Finally, if $\Gamma$ is the lower boundary of a Birkhoff zone of instability, we use the dynamics and Proposition~\ref{KAM2} so that the pseudo-orbit reaches the top boundary. 
The pseudo-orbit is equal to the real orbit on $\Gamma$ for any required time.
Then, take a small neighbourhood $W$ of a point in $\Gamma$ and jump into a point there whose forward iterate gets close enough to the upper boundary of the zone of instability.

In this way we are able to reach $\Gamma_1$ (and also $\Gamma_2$) at some iterate. From that time on consider the real orbit on $\Gamma_2$. We have thus constructed a pseudo-orbit $\{(\theta_n,r_n)\}_{n\in\Zz}$ that is equal to the real orbit for any given finite segment of the orbit, connecting $\Gamma_0$ to $\Gamma_2$.

Take now $(x_n,p_n)=h^{-1}(\theta_n,r_n)\in\Sigma$, $n\in\Zz$, and $t_n=\Theta(x_n,p_n,\ell)$, where $\Theta$ is the first return time to $\Sigma$ and $\ell$ is the period of the periodic orbit. Notice that $t_n$ is close to $\ell$ and bounded away from zero.
This defines a $(\delta,T)$-pseudo-geodesic of $g$ for given $\delta,T>0$.
Notice that $\delta'$ above is related to $\delta$ and $T$ gives us a lower bound on the number of consecutives iterates without jumps.

In conclusion, we have shown that there is $\varepsilon>0$ such that for any $\delta,T>0$ we can find a $(\delta,T)$-pseudo-geodesic which is not $\varepsilon$-shadowed by any true geodesic of $g$.

The above construction also implies that the pseudo-geodesic does not have the weak shadowing property.


\subsection{Proof of Proposition~\ref{elliptic} \mbox{(c)}}

We say that $\varphi_g^t$ is {\em topologically mixing} if for any given open sets $U,V\subset S_g^*M$ we can find $N >0$ such that 
$$
U \cap \varphi_g^t(V)  \neq \emptyset,\quad t \geq N.
$$
Notice that topologically mixing implies transitivity.
The next lemma is a particular case of~\cite[Lemma 3.1]{ASS}.

\begin{lemma}\label{mixing}
Let $g \in \mathcal{R}^\infty(M)$.
If $\varphi_g^t$ has the weak specification property, then
it is topologically mixing.
\end{lemma}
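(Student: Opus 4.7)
The plan is to derive topological mixing as a direct consequence of the two-interval shadowing supplied by weak specification. Given nonempty open sets $U, V \subset S_g^*M$, I would first pick points $(y_1, q_1) \in V$ and $(y_2, q_2) \in U$, together with a radius $\varepsilon > 0$ such that $B_\varepsilon(y_1, q_1) \subset V$ and $B_\varepsilon(y_2, q_2) \subset U$. The weak specification property then furnishes a constant $K = K(\varepsilon) > 0$, and my claim is that mixing holds with this $K$ (up to a negligible adjustment, see below).

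Next, for any sufficiently large $t$ I would build a $K$-spaced weak specification $\mathcal{S} = (\sigma, P)$ with two intervals. Taking $I_1 = [0, \eta]$ and $I_2 = [t, t + \eta]$ for a small $\eta > 0$, I set $P(s) = \varphi_g^s(y_1, q_1)$ on $I_1$ and $P(t + s) = \varphi_g^s(y_2, q_2)$ on $I_2$. Each piece is a genuine orbit segment, so the coherence relation $\varphi_g^{s_2}(P(s_1)) = \varphi_g^{s_1}(P(s_2))$ for $s_1, s_2$ in the same interval is automatic, and the spacing satisfies $a_2 - b_1 = t - \eta \geq K$ as soon as $t \geq K + \eta$.

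By weak specification, there then exists $(z, w) \in S_g^*M$ with $d(\varphi_g^s(z, w), P(s)) < \varepsilon$ for every $s \in I_1 \cup I_2$. In particular $d((z, w), (y_1, q_1)) < \varepsilon$ places $(z, w)$ in $V$, while $d(\varphi_g^t(z, w), (y_2, q_2)) < \varepsilon$ places $\varphi_g^t(z, w)$ in $U$. This yields $\varphi_g^t(V) \cap U \neq \emptyset$ for every $t \geq N := K + \eta$, which is precisely topological mixing.

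The only real subtlety is matching the paper's exact conventions for a weak specification: whether degenerate intervals $I_i = \{a_i\}$ are admissible and how the spacing inequality $a_{i+1} \geq b_i + K$ is read at the boundary. If degenerate intervals were disallowed one could still use the small parameter $\eta$ above at the cost of replacing $N = K$ by $N = K + \eta$, which has no effect on the mixing conclusion. Beyond this bookkeeping, the statement is essentially a definitional unfolding of the weak specification property applied to a specification supported on two orbit segments, and I do not anticipate any hard step.
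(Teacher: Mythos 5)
Your proof is correct. The paper itself does not write out an argument here --- it simply observes that the lemma is a particular case of Lemma~3.1 of Arbieto--Senos--Sodero \cite{ASS} --- but your direct construction (two short orbit segments $P(s)=\varphi_g^s(y_1,q_1)$ on $I_1=[0,\eta]$ and $P(t+s)=\varphi_g^s(y_2,q_2)$ on $I_2=[t,t+\eta]$, shadowed by a single point once $t\geq K+\eta$) is exactly the standard unfolding of the weak specification property, and it matches the paper's definitions: the shadowing in the specification property involves no reparameterization, so $d((z,w),(y_1,q_1))<\varepsilon$ and $d(\varphi_g^t(z,w),(y_2,q_2))<\varepsilon$ indeed give $\varphi_g^t(V)\cap U\neq\emptyset$ for all $t\geq N$. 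Your bookkeeping remark about possibly degenerate intervals is the only delicate point and you handle it adequately with the parameter $\eta$.
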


For surfaces we are able to show that topologically mixing excludes the existence of elliptic closed orbits, thus completing the proof of Proposition~\ref{elliptic} \mbox{(c)}.

\begin{lemma}\label{KTRobTrans}
Let $M$ be a surface.
If $g \in \mathcal{R}^\infty(M)$ and $\varphi_g^t$ is topologically mixing, then there are no irrationally elliptic closed orbits.
\end{lemma}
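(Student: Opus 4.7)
The plan is to argue by contradiction, reusing the Birkhoff normal form and KAM construction already deployed in \S\ref{section proof prop} to manufacture a proper $\varphi_g^t$-invariant open set, and then observe that any such set rules out topological mixing (in fact, already topological transitivity).

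Concretely, I would first assume the existence of an irrationally elliptic closed orbit $\gamma_0$ of $\varphi_g^t$ with Poincar\'e map $P_g$ on a transversal $\Sigma$. Exactly as in \S\ref{section proof prop}, the non-resonance of the eigenvalues lets me put $P_g$ into the twist normal form $Q_g$ of~\eqref{twist} on $\Aa=\Tt\times[0,+\infty)$ via $h\colon\Sigma\to\Aa$, and KAM theory produces a sequence of smooth diophantine invariant circles $\Gamma_n$ accumulating the fixed segment $\{r=0\}$ from above. Fix one such $\Gamma$, the graph of a positive smooth function $\psi\colon\Tt\to\Rr$, and consider the open annular region $D_\Gamma=\{(\theta,r)\in\Aa:0<r<\psi(\theta)\}$. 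Both $\Gamma$ and the fixed segment $\{r=0\}$ are $Q_g$-invariant, so $D_\Gamma$ is trapped between two invariant sets and is itself $Q_g$-invariant (being the component of $\Aa\setminus(\Gamma\cup\{r=0\})$ that the area-preserving homeomorphism $Q_g$ must map to itself). Pulling back and saturating by the flow,
$$
\widetilde D=\bigcup_{t\in\Rr}\varphi_g^t\bigl(h^{-1}(D_\Gamma)\bigr),
$$
yields a nonempty open $\varphi_g^t$-invariant subset of $S_g^*M$ contained in a bounded tubular neighborhood of $\gamma_0$, hence a \emph{proper} subset.

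The contradiction is then immediate: pick any nonempty open $V\subset S_g^*M\setminus\overline{\widetilde D}$ and set $U=\widetilde D$. Invariance of $U$ gives $\varphi_g^t(V)\cap U=\emptyset$ for every $t\in\Rr$, contrary to the topological mixing of $\varphi_g^t$, which would require $\varphi_g^t(V)\cap U\neq\emptyset$ for all sufficiently large $t$. The main obstacle I anticipate is ensuring cleanly that the saturation $\widetilde D$ is genuinely a proper subset of $S_g^*M$; this rests on the fact that the normal-form coordinates, the Poincar\'e section, and the return time function are defined only in a small neighborhood of $\gamma_0$, so for a KAM circle sufficiently close to $\{r=0\}$ the set $\widetilde D$ stays inside a small tubular neighborhood of $\gamma_0$. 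Everything else is a direct consequence of the twist-map dynamics already used in \S\ref{section proof prop} together with Lemma~\ref{mixing}.
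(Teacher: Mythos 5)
Your proposal is correct and follows essentially the same route as the paper, which simply notes that the invariant curves surrounding the elliptic fixed point (from the normal form and KAM argument of \S\ref{section proof prop}) contradict transitivity, hence topological mixing. You merely spell out the details the paper leaves implicit: the flow-saturation of the region below a KAM circle is a proper, nonempty, open, invariant set, which is exactly the obstruction to transitivity.
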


\begin{proof}
Assume that $\varphi_g^t$ has an irrationally elliptic closed orbit and the corresponding Poincar\'e (twist) map has an irrationally elliptic fixed point.
Again as in \S\ref{section proof prop}, the existence of invariant curves surrounding the fixed point contradicts transitivity.
Thus, $\varphi_g^t$ is not topologically mixing.
\end{proof}


\section{Applications}\label{section applications} 

\subsection{A generic approach}

\subsubsection{Obtaining a nontrivial hyperbolic set} By using \cite[Theorem 1.1]{CP} we obtain, for a dense subset of metrics, that $\overline{Per(g)}$ is a \emph{nontrivial} uniformly hyperbolic set. Thus, under a dense assumption, we can go further on the conclusions of Theorem~\ref{thm1} and Theorem~\ref{thm3}. 
Still, we observe that the conclusions of Theorem~\ref{thm1} and Theorem~\ref{thm3} can be upgraded even if we consider a generic setup. This was treated with detail in \cite[Theorem D]{C}. In fact, considering a metric in $\mathscr{H}$, where $\mathscr{H}$ is the \emph{residual} subset of strongly bumpy metrics and satisfying a transversality property, if $g\in \mathscr{H}\cap \mathscr{F}^2(M)$, then  $\overline{Per(g)}$ is a \emph{nontrivial} uniformly hyperbolic set. This slightly improves Theorem~\ref{thm1}.

\subsubsection{Shadowing and pointwise hyperbolicity}

In previous section we obtained a generic result under the stability of shadowing (or the weak shadowing or even the specification property), now we will obtain another generic result without this stability assumption.

For that let us consider the residual subset of metrics $\mathscr{R}$ such that (i) all closed orbits are hyperbolic or elliptic and (ii) all elliptic closed orbits are irrationally elliptic. By Proposition~\ref{elliptic} we obtain that under the shadowing hypothesis on $g\in\mathscr{R}$ only hyperbolic closed orbits are allowed. Therefore, we obtain the following result:

\begin{maincorollary}\label{generic}
There exists a $C^\infty$-residual subset $\mathscr{R}\subset\mathcal{R}^\infty(M)$ such that for any $g\in\mathscr{R}$, if $g$ satisfies the shadowing property (or the weak shadowing or even the specification property), then all closed orbits are hyperbolic.
\end{maincorollary}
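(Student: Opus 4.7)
The plan is to define $\mathscr{R}=\mathscr{R}_1\cap\mathscr{R}_2$ as the intersection of two $C^\infty$-residual subsets of $\mathcal{R}^\infty(M)$ realizing the two dichotomy properties (i) and (ii) described just before the statement, and then apply Proposition~\ref{elliptic} directly on any $g\in\mathscr{R}$ under a shadowing hypothesis.

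For property (i), Contreras~\cite{C} (already invoked in \S\ref{section geodesic flow}) shows that the set $\mathscr{R}_1$ of metrics all of whose closed orbits are either hyperbolic or elliptic is $C^2$-open and $C^2$-dense, hence in particular $C^\infty$-residual. For property (ii) I would mimic the classical Kupka--Smale construction. For each $n\in\Nn$ and each rational $p/q\in\mathbb{Q}\cap[0,1)$, let
\[
\mathscr{B}_{n,p/q}=\{g\in\mathcal{R}^\infty(M)\colon \text{some }\gamma\in\mathscr{P}(g)\text{ of period }\leq n\text{ has }e^{\pm 2\pi i p/q}\text{ as eigenvalues of }DP_g|_\gamma\}.
\]
Closed orbits of bounded period vary continuously with the metric in the $C^2$-topology as long as they stay non-parabolic, so $\mathscr{B}_{n,p/q}$ is closed in the non-parabolic stratum; and since by Lemma~\ref{local perturbation} the trace $T_\gamma(g)$ can be moved freely in a neighborhood of its value by an arbitrarily $C^2$-small deformation of $g$ preserving $\gamma$, and since the eigenvalue pair of the symplectic $2\times 2$ matrix $DP_g|_\gamma$ is determined by its trace, each $\mathscr{B}_{n,p/q}$ has empty $C^2$-interior. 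Taking complements and intersecting over the countable parameter set gives a $C^\infty$-residual subset $\mathscr{R}_2$ in which every elliptic closed orbit is irrationally elliptic.

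Now fix $g\in\mathscr{R}=\mathscr{R}_1\cap\mathscr{R}_2$ and suppose $\varphi_g^t$ has the shadowing property; the weak shadowing and weak specification cases follow identically from parts (b) and (c) of Proposition~\ref{elliptic}. Proposition~\ref{elliptic}(a) rules out irrationally elliptic closed orbits, membership in $\mathscr{R}_2$ then excludes elliptic closed orbits altogether, and membership in $\mathscr{R}_1$ forces every closed orbit to be hyperbolic. The main technical obstacle is verifying that each $\mathscr{B}_{n,p/q}$ is closed in the relevant topology: one must ensure persistence of the closed orbit of period at most $n$ under the $C^2$-deformation provided by Lemma~\ref{local perturbation}, together with a finiteness statement for the set of closed orbits of bounded period, which is handled by the standard Sard-type argument exploiting the compactness of $S_g^*M$ and working within the non-degenerate locus already secured by $\mathscr{R}_1$.
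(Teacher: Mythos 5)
Your proposal is correct and follows essentially the same route as the paper: the paper simply asserts the existence of the residual set $\mathscr{R}$ of metrics whose closed orbits are all hyperbolic or elliptic (citing \cite{C}) with every elliptic orbit irrationally elliptic, and then invokes Proposition~\ref{elliptic} to exclude the irrationally elliptic ones under any of the three shadowing-type hypotheses. The only difference is that you sketch the Kupka--Smale-type argument for residuality (which the paper leaves implicit), and there you should be slightly careful that $C^2$-density does not automatically upgrade to $C^\infty$-density; one should quote the genericity statements of \cite{C} directly in the $C^\infty$ topology.
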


It is an interesting question to know if the closure of the hyperbolic closed orbits on Corollary ~\ref{generic} is a uniformly hyperbolic set, say switch pointwise hyperbolicity by uniform hyperbolicity on the set of closed orbits.

\subsection{Hyperbolic homoclinic classes}

As discussed at the end of section \ref{introduction} it is not known if the closing lemma with respect to the $C^2$-topology on the metric holds for geodesic flows. Hence, on any manifold we are not sure if we have dense closed orbits for $C^2$-dense metrics. Clearly, if the manifold has negative curvature, the flow is Anosov and so, it displays dense closed orbits without needing any perturbation (cf. ~\cite{A}). Next we consider certain invariant \emph{a priori} proper subsets on the surface $M$ with dense closed orbits and show that they spread to the whole manifold under $C^2$-stability of shadowing. We will prove that if these sets have the shadowing (or the weak shadowing or even the specification property) property $C^2$-robustly, then these sets are actually the whole manifold and so the closed orbits are abundant in $M$. In overall, we obtain a sufficient condition to obtain closing without the need of any perturbation.

We recall some basic definitions. We define the strong stable and stable manifolds of $(x,p)$ as:
$$\displaystyle{W^{ss}(x,p):=\{(\tilde x, \tilde p) \in S_g^*M: \lim_{t \rightarrow +\infty} d(\varphi_g^t(\tilde x, \tilde p),\varphi_g^t(x,p))=0\}}$$
which, when $(x,p)$ is hyperbolic, is a $1$-dimensional set
and
$$\displaystyle{W^{s}(\mathcal{O}(x,p)):=\bigcup_{t \in \Rr} W^{ss}(\varphi_g^t(x,p))},$$
which, when $(x,p)$ is hyperbolic, is a $2$-dimensional set
  where $\mathcal{O}(x,p)$ stands for the orbit of $(x,p)$.
For small $\varepsilon>0$, the local strong stable manifold is  an embedded disk contained in the global stable manifold $W^{ss}(x,p)$ and is defined as
$$\displaystyle{W^{ss}_{\varepsilon}(x,p):=\{(\tilde x, \tilde p) \in S_g^*M: \, d(\varphi_g^t(\tilde x, \tilde p),\varphi_g^t(x,p))<\varepsilon \,\, \text{if} \,\, t \geq 0\}}.$$
By the stable manifold theorem, there exists an $\varepsilon=\varepsilon(x,p)>0$ such that
$$\displaystyle{W^{ss}(x,p)=\bigcup_{t \geq 0} \varphi_g^{-t}(W^{ss}_{\varepsilon}(\varphi_g^t(x,p)))}.$$
Analogous definitions hold for unstable manifolds.

 Any point $(\hat x,\hat p)\not=(x,p)$
 in $W^{uu}(x,p)\cap W^{ss}(x,p)$ is called a \emph{homoclinic point}. This intersection is said to be transversal if the dimension of the subspace spanned by $T_{(\hat x,\hat p)}W^{su}(x,p)$ and $T_{(\hat x,\hat p)}W^{ss}(x,p)$ is equal to $2$.

Given a hyperbolic point $(x,p)\in S_g^*M$ for $\varphi_g^t$ its \emph{homoclinic class}, denoted $H((x,p),\varphi_g^t)$, is the closure of the set of transverse intersections between the stable and unstable manifolds of all points in the orbit of $\varphi_g^t(x,p)$. It is well-known by the Birkhoff-Smale Theorem that $H((x,p),\varphi_g^t)$ is a transitive set with dense closed orbits.

All the definitions of shadowing, weak shadowing and specification given previously can be readapted to a local point of view by considering those properties defined in isolated sets. Next we define them properly so we have no ambiguity.
\begin{itemize}
\item (shadowing) The Hamiltonian geodesic flow $\varphi_g^t$ is said to have the {\em shadowing property in the isolated set $\Lambda$} if, for any $\varepsilon>0$ there exist $\delta,\, T>0$ such that
any $(\delta,T)$-pseudo-geodesic $[(x_i,p_i), (t_i)]_{i \in \Z}$ in $\Lambda$ is $\varepsilon$-{shadowed} by some geodesic of $g$. We say that the Hamiltonian geodesic flow $\varphi_g^t$ associated to $g$ is {\em stably shadowable in $\Lambda$} if there exists an isolating block $U$ such that the flow $\varphi_{\hat g}^t$ displays the shadowing property in $\Lambda_{\hat g}:=\bigcap_{t\in\mathbb{R}}\varphi^t_{\hat g}(U)$ for any $C^\infty$ metric $\hat g$ sufficiently $C^2$-close to $g$.

\item (specification property) The Hamiltonian geodesic flow $\varphi_g^t$ is said to have the {\em weak specification property in the isolated set $\Lambda$} if $\Lambda$ has it.
We say that the Hamiltonian geodesic flow $\varphi_g^t$ associated to $g$ has the {\em stable weak specification property in $\Lambda$} if
there exists an isolating block $U$ such that the flow $\varphi_{\hat g}^t$  has the weak specification property in $\Lambda_{\hat g}:=\bigcap_{t\in\mathbb{R}}\varphi^t_{\hat g}(U)$, for any $C^\infty$ metric $\hat g$ sufficiently $C^2$-close to $g$.

\item (weak shadowing property) The Hamiltonian geodesic flow $\varphi_g^t$ is said to have the {\em weak shadowing property in the isolated set $\Lambda$} if, for any $\varepsilon>0$ there exist $\delta, T>0$ such that
any $(\delta,T)$-pseudo-geodesic $[(x_i,p_i), (t_i)]_{i \in \Z}$ in $\Lambda$ is weakly $\varepsilon$-shadowed by some geodesic of $g$.
We say that the Hamiltonian geodesic flow $\varphi_g^t$  is {\em stably weakly shadowable in $\Lambda$} if
there exists an isolating block $U$ such that 
the flow $\varphi_{\hat g}^t$ displays the weak shadowing property in $\Lambda_{\hat g}:=\bigcap_{t\in\mathbb{R}}\varphi^t_{\hat g}(U)$, for any $C^\infty$ metric $\hat g$ sufficiently $C^2$-close to $g$.

\end{itemize}

In this section we obtain the following result:

\begin{maintheorem}\label{New}
If $\varphi_g^t$ is the Hamiltonian geodesic flow on $S_g^*M$, where $M$ is a surface, and $\Lambda\subset S_g^*M$ is a homoclinic class satisfying the shadowing  property (or weak shadowing, or specification) $C^2$-robustly, then $\varphi_g^t$ is Anosov.
\end{maintheorem}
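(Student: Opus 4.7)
The plan is to adapt the proof of Theorem~\ref{thm2} to the isolated setting of the homoclinic class $\Lambda$, combine it with Theorem~\ref{thm3} to obtain uniform hyperbolicity of $\Lambda$, and then exploit the conservative nature of the geodesic flow to upgrade this to the Anosov property of $\varphi_g^t$ on all of $S_g^*M$.

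First, I would show that every closed orbit $\gamma\subset\Lambda$ is hyperbolic. Let $U$ be an isolating block for $\Lambda$. Were $\gamma$ parabolic or elliptic, Lemma~\ref{local perturbation} would produce a $C^\infty$-metric $\hat g$, arbitrarily $C^2$-close to $g$ and coinciding with $g$ outside an arbitrarily thin tubular neighbourhood $W\subset\widetilde\pi(U)$ of $\widetilde\pi\gamma$, for which $\gamma$ is irrationally elliptic. Since the perturbation is supported in $U$, we have $\gamma\subset\Lambda_{\hat g}$. The Birkhoff-normal-form twist-map construction in Proposition~\ref{elliptic} then yields a $(\delta,T)$-pseudo-geodesic of $\hat g$ supported in an arbitrarily small neighbourhood of $\gamma$ (hence inside $\Lambda_{\hat g}$) that cannot be $\varepsilon$-shadowed, cannot be weakly $\varepsilon$-shadowed, and (via Lemma~\ref{mixing} in case (c)) obstructs weak specification. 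This contradicts the $C^2$-robustness of the hypothesis. Since $\Lambda$ is a homoclinic class its closed orbits are dense in $\Lambda$ (Birkhoff--Smale), and all of them are $C^2$-robustly hyperbolic by the previous step. A localised application of the Contreras--Paternain argument (Theorem~\ref{thm3}, whose proof in~\cite{CP} uses only the $C^2$-robust hyperbolicity of the closed orbits meeting $U$) then gives that $\Lambda$ is uniformly hyperbolic.

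It remains to show $\Lambda=S_g^*M$. Because $\varphi_g^t$ preserves the full-support Liouville measure, Poincar\'e recurrence yields $\Omega(\varphi_g^t)=S_g^*M$; in particular $\varphi_g^t$ admits no attractor or repeller strictly contained in $S_g^*M$. Assume by contradiction that there is $(x,p)\in S_g^*M\setminus\Lambda$. Using the local product structure of the hyperbolic basic set $\Lambda$, the density of hyperbolic periodic orbits in $\Lambda$, the transverse heteroclinic connections provided by the homoclinic-class structure, and the non-wandering character of $(x,p)$, I would splice a long segment of a periodic orbit in $\Lambda$ with a short recurrence arc through $(x,p)$, producing a $(\delta,T)$-pseudo-geodesic lying in $\Lambda_{\hat g}$ that passes $\varepsilon$-close to $(x,p)$. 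Robust shadowing (respectively weak shadowing or weak specification) would then furnish a true orbit of $\varphi_{\hat g}^t$ in $\Lambda_{\hat g}$ passing $\varepsilon$-close to $(x,p)$, forcing $(x,p)\in\overline{\Lambda}=\Lambda$.

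The main obstacle is this final splicing step. No closing lemma is available in the $C^2$-metric topology for geodesic flows (cf.~the discussion preceding Theorem~\ref{thm3}), so the pseudo-geodesic in $\Lambda_{\hat g}$ that must reach points of $S_g^*M\setminus\Lambda$ cannot be produced by a local perturbation, as was done in~\cite{BRT} for general Hamiltonians. It has to be engineered purely from the intrinsic structure of $\Lambda$ as a transitive hyperbolic basic set with heteroclinic intersections, together with the conservative-Hamiltonian geometry of $\varphi_g^t$ on $S_g^*M$. A possibly cleaner alternative would be to prove directly that $\Lambda$ is open in $S_g^*M$ by showing, via Poincar\'e recurrence, that $W^s(\Lambda)\cup W^u(\Lambda)\subset\Lambda$ and then invoking local product structure together with the connectedness of $S_g^*M$.
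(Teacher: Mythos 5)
Your first step (ruling out non-hyperbolic closed orbits in $\Lambda$ via Lemma~\ref{local perturbation} and the twist-map obstruction inside the isolating block, then upgrading to uniform hyperbolicity of $\Lambda$) is exactly the content of the paper's first auxiliary lemma, which the authors themselves only sketch and ``leave to the reader''; at that level of detail you are fine. The genuine gap is in the step $\Lambda=S_g^*M$. Your primary route --- splicing a pseudo-geodesic through a point $(x,p)\in S_g^*M\setminus\Lambda$ and invoking robust shadowing --- cannot work: the hypothesis only concerns $(\delta,T)$-pseudo-geodesics contained in $\Lambda_{\hat g}$, so such a pseudo-geodesic cannot actually pass through $(x,p)$, and any shadowing orbit it produces lies in the locally maximal set $\Lambda_{\hat g}$; at best you would learn that $(x,p)$ is within the fixed $\varepsilon$ of $\Lambda$, which does not give $(x,p)\in\overline{\Lambda}=\Lambda$. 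You correctly flag this as the main obstacle, and your ``cleaner alternative'' (openness of $\Lambda$ via local product structure, invariant manifolds contained in $\Lambda$, and connectedness of $S_g^*M$) is indeed the paper's route --- but you give no argument for the key inclusion, and that inclusion is the entire content of the proof.

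What the paper actually does at this point is Newhouse's recurrence-box argument, which uses no shadowing at all: only hyperbolicity of $\Lambda$, density of its periodic orbits, the Liouville measure, and the absence of singularities of the geodesic vector field. Suppose some $W^{uu}(x,p)$ with $(x,p)\in\Lambda$ contains a ``hole'', i.e.\ an open arc $(a,b)$ disjoint from $\Lambda$. Choosing a periodic point $(\hat x,\hat p)\in\Lambda$ close to $(x,p)$ and using the $C^1$-continuity of unstable manifolds, one builds a thin flow box $\mathcal{B}$ whose base is a short flow-saturation of $(a,b)$, whose sides are local strong stable manifolds of $a$ and $b$, and whose top lies on $W^{uu}(\hat x,\hat p)$. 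Poincar\'e recurrence forces a positive-measure set of points of $\mathcal{B}$ to return to $\mathcal{B}$; since the flow stretches the base, contracts the sides and preserves the flow direction, and since invariant manifolds have no self-intersections, any return forces an iterate of the base to cross the sides, producing transverse intersections that place points of $\Lambda$ inside the hole --- a contradiction. Hence $\Lambda$ meets every local invariant manifold of its points densely, the local product structure makes $\Lambda$ open, and connectedness gives $\Lambda=S_g^*M$, whence $\varphi_g^t$ is Anosov. This geometric-measure-theoretic argument is the missing idea in your proposal.
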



The results that we proved before can be easily adapted in order to obtain the following result. We leave the details of the proof to the reader.

\begin{lemma}
If $\varphi_g^t$ is the Hamiltonian geodesic flow on $S_g^*M$, where $M$ is a surface, and $\Lambda\subset S_g^*M$ is a homoclinic class satisfying the shadowing  property (or weak shadowing, or specification) $C^2$-robustly, then $\Lambda$ is hyperbolic.
\end{lemma}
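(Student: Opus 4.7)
The plan is to adapt the arguments behind Theorems~\ref{thm2} and~\ref{thm3} to the isolated setting, using the structure of homoclinic classes. Since $\Lambda$ is the homoclinic class of some hyperbolic closed orbit $\gamma_0$, the Birkhoff-Smale theorem provides a dense set of hyperbolic closed orbits (all saddles homoclinically related to $\gamma_0$) inside $\Lambda$. My goal splits in two steps: first, rule out any non-hyperbolic closed orbit inside $\Lambda$ (the local counterpart of Theorem~\ref{thm2}); second, promote pointwise hyperbolicity of closed orbits in $\Lambda$ to uniform hyperbolicity of $\Lambda$ via the local version of Theorem~\ref{thm3}.

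For the first step I argue by contradiction. Suppose $\gamma\subset\Lambda$ is a closed orbit that is not hyperbolic. By Lemma~\ref{local perturbation}, a $C^2$-arbitrarily small perturbation of $g$, supported in a tubular neighbourhood of $\tilde\pi\gamma$ in $M$ and keeping $\gamma$ as an orbit, produces a metric $\hat g$ for which $\gamma$ is irrationally elliptic. Because $\Lambda$ is locally maximal and the base orbit $\gamma_0$ persists under small $C^2$-perturbations, the continuation $\Lambda_{\hat g}=\bigcap_{t\in\Rr}\varphi_{\hat g}^t(U)$ is still a homoclinic class containing $\gamma$ and has a dense set of hyperbolic closed orbits accumulating on $\gamma$. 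Around $\gamma$, the Birkhoff normal form and KAM theory yield invariant circles of the transversal Poincar\'e twist map, as in \S\ref{section proof prop}. Along the lines of the construction therein one builds a $(\delta,T)$-pseudo-geodesic that is forced to cross these invariant circles, using Proposition~\ref{KAM2} to traverse any Birkhoff zone of instability. Such a pseudo-geodesic cannot be $\varepsilon$-shadowed (nor weakly $\varepsilon$-shadowed) by any true geodesic of $\hat g$, contradicting the robust shadowing hypothesis on $\Lambda$. In the specification case the argument is slightly different: the invariant circles obstruct topological mixing of $\varphi_{\hat g}^t$ restricted to $\Lambda_{\hat g}$, while weak specification would, by the localized versions of Lemmas~\ref{mixing} and~\ref{KTRobTrans}, force topological mixing.

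Once all closed orbits in $\Lambda$ are known to be hyperbolic, one has a local analog of $g\in\mathscr{F}^2(M)$ on the isolating block $U$. The proof of Theorem~\ref{thm3} is local in essence, combining Franks' lemma type perturbations with the robustness of pointwise hyperbolicity, and the same argument applied inside $\Lambda_{\hat g}$ yields that the closure of the hyperbolic closed orbits in $\Lambda$ is uniformly hyperbolic. Since Birkhoff-Smale gives that this closure is all of $\Lambda$, we conclude that $\Lambda$ is uniformly hyperbolic, as desired.

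The main obstacle lies squarely in the pseudo-orbit construction of the second paragraph. In the global setting of Proposition~\ref{elliptic} the pseudo-orbit can be built freely on KAM circles around the elliptic fixed point, with no restriction on where its points live in $S_g^*M$. In the localized setting the definition of stable shadowability in $\Lambda$ only allows one to shadow pseudo-orbits with all points in $\Lambda_{\hat g}$ itself, so one must realize a non-shadowable pseudo-orbit whose jump points lie in $\Lambda_{\hat g}$. This forces an appeal to the density of hyperbolic closed orbits of $\Lambda_{\hat g}$ in neighbourhoods of $\gamma$, together with Proposition~\ref{KAM2}, to produce the required crossings while keeping pseudo-orbit points inside $\Lambda_{\hat g}$. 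This is presumably the step the authors leave as an exercise.
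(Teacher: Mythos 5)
Your proposal follows essentially the route the paper intends: the authors explicitly leave this lemma as an adaptation of Lemma~\ref{local perturbation}, Proposition~\ref{elliptic} and Theorem~\ref{thm3} to the isolated setting, which is exactly the two-step scheme you carry out. The one point worth simplifying is your ``main obstacle'': since $\Lambda_{\hat g}$ is the \emph{maximal} invariant set in the isolating block $U$, every invariant circle, Birkhoff zone of instability and orbit in a small enough neighbourhood of the perturbed elliptic orbit $\gamma$ automatically lies in $\Lambda_{\hat g}$, so the pseudo-geodesic of \S\ref{section proof prop} already has all of its points in $\Lambda_{\hat g}$ and no appeal to the density of hyperbolic closed orbits is needed for that step.
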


Then, the proof of Theorem~\ref{New} is derived directly from the following lemma which is based in a simple but beautiful idea of Newhouse (\cite{New}).

 \begin{lemma}
If $\varphi_g^t$ is the Hamiltonian geodesic flow on $S_g^*M$, where $M$ is a surface, and $\Lambda\subset S_g^*M$ is a hyperbolic homoclinic class, then $\Lambda=S_g^*M$ and $\varphi_g^t$ is Anosov.
\end{lemma}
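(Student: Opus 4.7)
The plan is to follow Newhouse's classical strategy, adapted to the three-dimensional energy level $S_g^*M$ carrying the Hamiltonian (hence volume-preserving) geodesic flow. Because $M$ is a surface, $S_g^*M$ has dimension three, so at every point of the hyperbolic set $\Lambda$ the transversal linear Poincar\'e flow splits a two-dimensional subspace into one-dimensional stable and unstable directions. Including the flow direction, each hyperbolic orbit has a two-dimensional (codimension one) global stable and unstable manifold in $S_g^*M$, immersed, $C^1$, and without boundary. I would fix a hyperbolic periodic point $(x_0,p_0)\in\Lambda$ whose orbit is dense in $\Lambda$ (such a point exists since the homoclinic class is transitive and hyperbolic periodic points are dense in it), and focus on $W^s((x_0,p_0))$ and $W^u((x_0,p_0))$.

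The first step, which is the heart of the argument, is to prove
\begin{equation*}
\overline{W^s((x_0,p_0))}=S_g^*M=\overline{W^u((x_0,p_0))}.
\end{equation*}
Since $\Lambda$ is a homoclinic class, there exist transverse homoclinic points $q\in W^s((x_0,p_0))\pitchfork W^u((x_0,p_0))$. The $\lambda$-lemma then shows that any disc transverse to $W^s((x_0,p_0))$ at $q$ has forward iterates that accumulate in $C^1$-topology on $W^u((x_0,p_0))$, and by symmetry an analogous statement holds for $W^s$. To upgrade this into density in $S_g^*M$, I would invoke volume-preservation: the geodesic flow preserves the Liouville measure on $S_g^*M$, so Poincar\'e recurrence holds. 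Given any point $y\in S_g^*M$, recurrence provides orbits returning arbitrarily close to $y$, and combining this with the $\lambda$-lemma applied at a homoclinic point produces transverse intersections of $W^u((x_0,p_0))$ with $W^s((x_0,p_0))$ arbitrarily close to $y$. This yields simultaneously that $\overline{W^s((x_0,p_0))}$ is open, and since it is obviously closed and nonempty, connectedness of $S_g^*M$ forces it to equal the whole $S_g^*M$.

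The second step is to extend the hyperbolic splitting from $\Lambda$ to the whole $S_g^*M$. Since $W^s((x_0,p_0))$ is dense and its tangent bundle (transverse to the flow) is the one-dimensional stable direction, and similarly for $W^u$, the $DP_g^t$-invariant continuous splitting defined on $\Lambda$ extends by continuity along these dense leaves. The uniformity of the contraction/expansion estimates on $\Lambda$ passes to the closure, so the extended splitting is uniformly hyperbolic on all of $S_g^*M$. Equivalently, $\Lambda$ being locally maximal (as a hyperbolic set) together with $W^s(\Lambda)\cap W^u(\Lambda)=S_g^*M$ forces $\Lambda=S_g^*M$, and then the defining hyperbolicity of $\Lambda$ is exactly the Anosov condition for $\varphi_g^t$.

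The main obstacle is the openness of $\overline{W^s((x_0,p_0))}$ in the first step: one must carefully combine the $\lambda$-lemma with Poincar\'e recurrence, using that the Poincar\'e return map on a two-dimensional transversal is area-preserving. This is precisely the point where the dimension assumption (surface, hence three-dimensional energy level with codimension-one stable/unstable manifolds) and the symplectic/Hamiltonian nature of the geodesic flow become indispensable; the argument would fail in higher dimensions or without volume preservation. Once this openness is secured, the remaining deductions are soft and follow from standard hyperbolic theory.
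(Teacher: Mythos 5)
Your overall strategy (connectedness of $S_g^*M$ plus volume preservation, \`a la Newhouse) points in the right direction, but both of your key steps have genuine gaps, and the first one is fatal. Step 1 claims $\overline{W^s(\mathcal{O}(x_0,p_0))}=S_g^*M$ from the $\lambda$-lemma plus Poincar\'e recurrence. Recurrence only returns almost every orbit close to its \emph{own} starting point; it does not produce orbits travelling from a neighbourhood of a homoclinic point to a neighbourhood of an arbitrary $y\in S_g^*M$ --- that would be transitivity of the whole flow, which is not available a priori. Indeed, in the area-preserving twist-map setting (exactly the local model near an elliptic closed geodesic used elsewhere in this paper) a hyperbolic periodic point with transverse homoclinic intersections typically has $\overline{W^s}$ confined to a Birkhoff zone of instability bounded by invariant circles: recurrence and the $\lambda$-lemma hold there, yet density in the ambient manifold fails. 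What must rule this out is the hyperbolicity of the \emph{whole class} $\Lambda$, which your Step 1 never invokes (you only use hyperbolicity of the single periodic orbit). The paper uses it essentially: it proves that $\Lambda$ itself is open, by showing that $\Lambda\cap W^{uu}(x,p)$ is dense in the local strong unstable manifold of every $(x,p)\in\Lambda$ (and symmetrically for $W^{ss}$). A hypothetical gap $(a,b)\subset W^{uu}(x,p)\setminus\Lambda$ is used to build a three-dimensional box $\mathcal{B}$ bounded by pieces of strong stable/unstable manifolds and short flow segments; Poincar\'e recurrence forces a return $\varphi_g^T(\mathcal{B})\cap\mathcal{B}\neq\emptyset$, and since invariant manifolds cannot self-intersect, the stretched returning box must cross the stable sides of $\mathcal{B}$, planting points of $\Lambda$ inside the gap --- a contradiction. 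The resulting uniform local product structure makes $\Lambda$ open, hence $\Lambda=S_g^*M$ by connectedness.

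Step 2 is also unsound as stated: uniform hyperbolic estimates hold on $\Lambda$ and pass to $\overline{\Lambda}=\Lambda$, not to $\overline{W^s(\mathcal{O}(x_0,p_0))}$. A point far out on a stable leaf need not belong to $\Lambda$, and while it inherits forward contraction along the leaf direction, it carries no a priori uniform expansion estimate in any complementary direction, so the splitting does not ``extend by continuity along dense leaves.'' In the paper's route this step disappears entirely: once $\Lambda=S_g^*M$ is established, the hyperbolicity of $\Lambda$ \emph{is} the Anosov property, with no extension argument needed.
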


\begin{proof}
Since $\Lambda$ is clearly closed and $S_g^*M$ is connected it is sufficient to prove that $\Lambda$ is open. For any $(x,p)\in\Lambda$ we will show that there exists a product structure around $(x,p)$ formed by stable/unstable local manifolds of uniform size. Knowing that the geodesic flow has constant velocity (in particular do not have equilibrium points) is crucial to go on with the proof and avoid singular-hyperbolicity thus non-uniform sizes of invariant manifolds. Given $W^{uu}(x,p)$ we claim that densely in $W^{uu}(x,p)$ we have elements in $\Lambda$ and analogously in $W^{ss}(x,p)$ which is sufficient to obtain the product structure. Suppose, by contradiction, that we have a hole in $W^{uu}(x,p)$ without elements in $\Lambda$, say an open interval $(a,b)\subset W^{uu}(x,p)$ without elements in $\Lambda$. Since the periodic points are dense in $\Lambda$ we can choose $(\hat x,\hat p)\in Per(g)\cap \Lambda$ very close to $(x,p)$. By continuity of the unstable manifold locally we have that $W^{uu}(\hat x,\hat p)$ and $W^{uu}(x,p)$ are $C^1$-close. Now, by invariance of $W^{uu}(x,p)$ we can transport the hole $(a,b)$ near $(x,p)$ in order to obtain a more or less straight rectangular box $\mathcal{B}$, see Figure ~\ref{Fig1}, formed by:
\begin{figure}\begin{center}
\includegraphics[height=5cm]{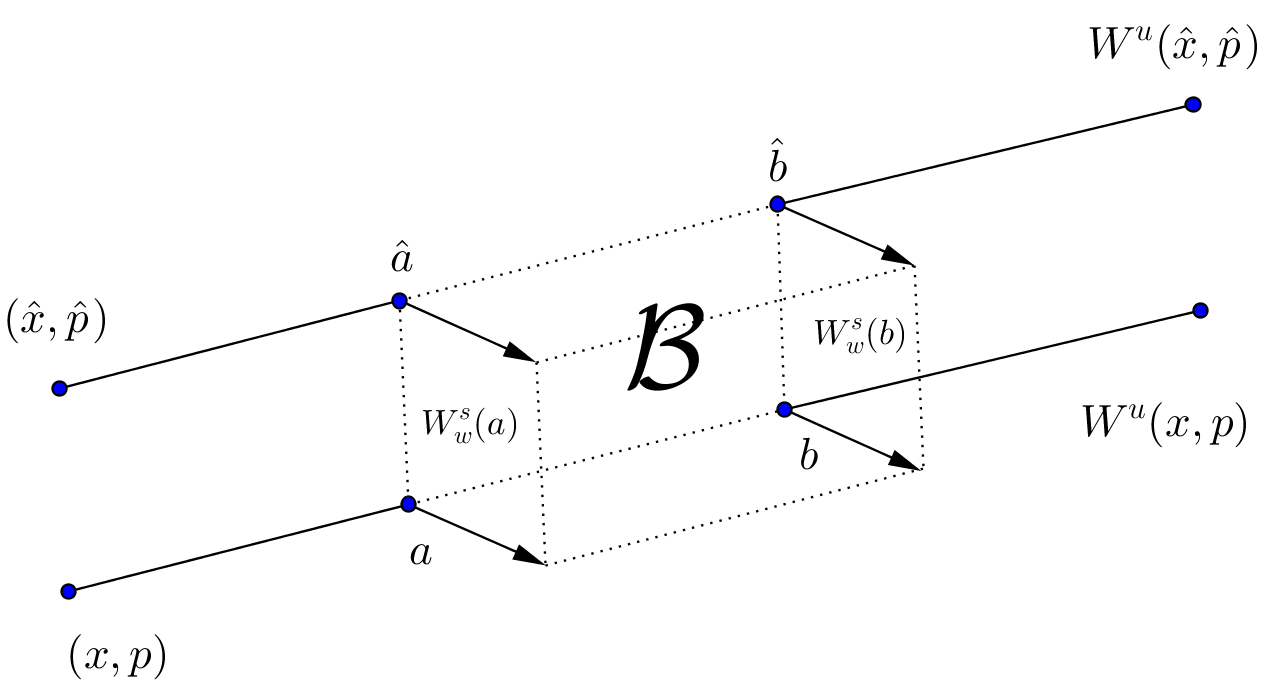}
\end{center}
\caption{Construction of the box $\mathcal{B}$.}
\label{Fig1}
\end{figure}
\begin{itemize}
\item the base is a $\varphi_g^t$-iteration of $(a,b)$ with time between $[0,\tau]$ for $\tau>0$ very small;
\item the sides are formed by the weak local stable manifolds of $a$ and $b$;
\item the top is a $\varphi_g^t$-iteration of $(a',b')$ with time between $[0,\tau]$ for $\tau>0$ very small where $a'=W_\delta^{ss}(a)\cap W^{uu}(\hat x,\hat p)$ and $b'=W_\delta^{ss}(b)\cap W^{uu}(\hat x,\hat p)$.
\end{itemize}
By Poincar\'e recurrence theorem we have that Lebesgue almost every point in $\mathcal{B}$ return to $\mathcal{B}$. Let $T>0$ be a return point for $y\in \mathcal{B}$. Now, $\varphi_g^t$ will enlarge the length, decrease the height and keep the depth of $\mathcal{B}$.  Since invariant manifolds do not have self intersections and the only way that $\varphi_g^t(\mathcal{B})$ intersects $\mathcal{B}$ is by having an intersection of a $\varphi_g^t$-iterate of an element in the base of $\mathcal{B}$ with the sides of $\mathcal{B}$. But, this would imply that there are elements in $\Lambda$ in the base of $\mathcal{B}$ which is a contradiction with the existence of a hole.

\end{proof}



\section*{Acknowledgements}

MB was partially supported by FCT - `Funda\c{c}\~ao para a Ci\^encia e a Tecnologia', through CMA-UBI, project UID/MAT/00212/2013. 

JLD was partially supported by the project CEMAPRE - UID/MULTI/00491/2013 financed by FCT/MEC through national funds. 

MJT was partially supported by the Research Centre of Mathematics of the University of Minho with the Portuguese Funds from the `Funda\c c\~ao para a Ci\^encia e a Tecnologia', through the Project UID/MAT/00013/2013.



\begin{thebibliography}{00}

\bibitem{A} D. V. Anosov, \emph{Geodesic flows on closed Riemannian manifolds of negative curvature}, Trudy Mat. Inst. Steklov., 90 (1967)  (Russian).

\bibitem{ASS} {A. Arbieto,  L. Senos and  T. Sodero}, \emph{The specification property for flows from the robust and generic viewpoint}
J. Diff. Eq., 253 (6) (2012), 1893--1909.

\bibitem{MBJLD} { M. Bessa and J. Lopes Dias}, \emph{Generic dynamics of 4-dimensional $C^2$ Hamiltonian systems},
 Commun. in Math. Phys., 281 (2008), 597--619.
 
\bibitem{BD2} { M. Bessa and J. Lopes Dias}, \emph{Hamiltonian suspension of perturbed Poincar\'e sections and an application},
 Math. Proc. Cam. Phil. Soc., 157 (2014), 101--112.

\bibitem{BRT} M. Bessa, J. Rocha and M. J. Torres, \emph{Shades of hyperbolicity for Hamiltonians}, Nonlinearity, 26 (10) (2013),  2851--2873. 

\bibitem{BDT} C. Bonatti, L. J. D\'iaz and G. Turcat, \emph{Pas de ``Shadowing lemma" pour les dynamiques partiellement hyperboliques}, Comptes Rendus Mathematique, 330 (7) (2000), 587--592.

\bibitem{Calvez} P. Le Calvez, \emph{Propri\'et\'es dynamiques des diff\'eomorphismes de l'anneau e du tore}, Ast\'erisque 204 (1991).

\bibitem{CPuj} F.A. Carneiro and E.R. Pujals, \emph{Partially hyperbolic geodesic flows}, Ann. I.H.Poincar\'e --  Non Linear Analysis, 31 (5) (2014), 985--1014.

\bibitem{C}  
G. Contreras 
\emph{Geodesic flows with positive topological entropy, twist maps and hyperbolicity},  
Annals Math. 172 (2010), 761--808.
%


\bibitem{CP}  G. Contreras and G.  Paternain, \emph{Genericity of geodesic flows with positive topological entropy on $S^2$}, J. Differential Geom., 61 (1) (2002), 1--49.

\bibitem{CPi} R. M. Corless and S. Yu. Pilyugin, \emph{Approximate and real trajectories for generic dynamical systems}, J. Math. Anal. Appl., 189 (1995), 409--423.



\bibitem{Gole} C. Gol\'e, Symplectic Twist Maps: Global Variational Techniques. Singapore, World Scientific (2001).

\bibitem{Herman} M. Herman, \emph{Sur les courbes invariantes par les diff\'eomorphismes de l'anneau}, Ast\'erisque 103-104 (1983).

\bibitem{HM} T. Hunt and R.S. MacKay, \emph{Anosov parameter values for the triple linkage and a physical system with a
uniformly chaotic attractor},  Nonlinearity 16 (2003), 1499--1510.

 

\bibitem{Mo} J. Moser, \emph{Proof of a generalized form of a fixed point theorem due to G. D. Birkhoff},
Lecture Notes in Math., 597, 464--549. Berlin, Heidelberg, New York: Springer, 1977.

\bibitem{Moser} J. Moser, \emph{On invariant curves of area-preserving mappings of an annulus.} Nachr. Akad. Wiss. Gšttingen Math.-Phys. Kl. II 1962 1962 1--20.

 \bibitem{New}
S. Newhouse,
    \newblock Topics in Conservative Dynamics.
    \newblock {\em Regular and Chaotic Motions in Dynamic Systems}. Edited by G. Velo and A. S. Wightman. D. Reidel Publishing Company, Dordrecht-Holland. NATO Advanced Study Institutes Series. Volume 118, 1985, p.103--184. 

\bibitem{Paternain} 
G. Paternain, 
\emph{Geodesic Flows}, 
Birkh\"auser, 1999.


\bibitem{P} S. Pilyugin, \emph{Shadowing in Dynamical Systems}, Lecture Notes in Math., 1706, Springer-Verlag, Berlin, 1999.

\bibitem{PugRob} C. Pugh and C. Robinson, \emph{The $C^1$ closing lemma, including Hamiltonians,} Ergod. Th. \&\ Dynam. Sys., 3, (1983), 261--313.



\bibitem{Sa} K. Sakai, \emph{Diffeomorphisms with weak shadowing}, Fund. Math., 168 (2001), 57--75.

\bibitem{S1} M. Shub, \emph{Global Stability of Dynamical
Systems}, Springer-Verlag, New York, 1987.

\bibitem{V} D. Visscher, \emph{A new proof of Franks' lemma for geodesic flows}, 
Discrete and Continuous Dynamical Systems-Series A , 34 (2014), 4875--4895.

\end{thebibliography}
\end{document}